\documentclass{amsart}%
\usepackage{graphicx}
\usepackage{amsmath}
\usepackage{amsfonts}
\usepackage{amssymb}
\usepackage{ifpdf}%
\providecommand{\U}[1]{\protect\rule{.1in}{.1in}}
\newtheorem{theorem}{Theorem}

\newtheorem{definition}[theorem]{Definition}

\newtheorem{lemma}[theorem]{Lemma}

\newtheorem{proposition}[theorem]{Proposition}
\newtheorem{remark}[theorem]{Remark}

\makeatletter \@namedef{subjclassname@2020}{\textup{2020}
Mathematics Subject Classification} \makeatother
\begin{document}
\title[Oscillation Functionals and Embeddings in r.i. spaces]{Oscillation Functionals and Embeddings in Rearrangement-Invariant Spaces}
\author{Joaquim Mart\'{\i}n}
\address{Department of Mathematics\\
Universitat Aut\`onoma de Barcelona} \email{Joaquin.Martin@uab.cat}
\thanks{Partially supported by Grants PID2024-160507NB-I00 and PID2024-155917NB-I00
funded by MCIN/AEI/10.13039/501100011033.} \subjclass[2020]{46E30,
46E35, 46B70} \keywords{Oscillation functional,
rearrangement-invariant spaces, Boyd indices, Hardy-type operators,
Sobolev embeddings}

\begin{abstract}
We study embeddings associated with oscillation functionals in
rearrangement-invariant spaces. More precisely, given a positive
function \(\psi\), we analyze how the interaction between the
geometry of the underlying space and the growth of \(\psi\)
determines the behaviour of these embeddings, leading to a natural
classification into subcritical, supercritical and critical regimes.

We prove that in the critical regime logarithmic refinements of
Hansson type appear, governed by a deviation function associated
with the quotient \(\psi/\varphi_X\), where \(\varphi_X\) is the
fundamental function of the underlying space. This leads to explicit
Hansson-type targets and, in the bounded case of the deviation
function, to Trudinger-type consequences. The results recover and
extend several classical endpoint embeddings.
\end{abstract}

\maketitle

\section{Introduction}

The oscillation functional
\[
O(f,t)=f^{**}(t)-f^{*}(t), \qquad0<t<1,
\]
plays an important role in many problems of analysis. Here $f^{*}$
and $f^{**}$ denote the decreasing rearrangement of $f$ and its
maximal average, respectively (precise definitions, notation and
background material concerning the notions appearing in this
introduction and used throughout the paper are collected in
Section~\ref{sec:background}). The quantity $O(f,t)$ measures the
gap between the average size of the largest values of $f$ on a set
of measure $t$ and the boundary level $f^{*}(t)$.

Oscillation functionals of this type arise naturally in connection
with Sobolev and Besov-type embeddings, interpolation theory,
rearrangement inequalities, and related questions. Their systematic
use in the study of endpoint embeddings and symmetrization methods
was developed in work of M. Milman and collaborators (see, e.g.,
\cite{BMR,MP,MMP,MM3,MM6,MMas}). In many such situations one
encounters inequalities of the form
\begin{equation}
O(f,t)\le\psi(t)\,A(f,t), \label{EMBDOS_intro}%
\end{equation}
where $\psi$ reflects geometric or analytic features of the
underlying space such as isoperimetric profiles, volume growth, or
capacity estimates, while the functional $A(f,t)$ captures analytic
properties of $f$. Typical examples include gradients and fractional
derivatives
\cite{tal,tal1,maz'yabook,BMR,MMP,MM3,MM6,Cianchi1,CianchiPick,Kol1,MoPosi},
Besov-type embeddings \cite{CK,EEK,FMP}, Haj\l asz gradients and
moduli of continuity in metric measure spaces
\cite{Mas,Mo1,Mo2,Mo3,Mo4}, and interpolation functionals and sharp
maximal functions \cite{Grigoryan,Ler,SS,BK,BdVS,Ler0}. The
literature on these topics is extensive.

Given a rearrangement-invariant (r.i.) space $X$, the estimate
(\ref{EMBDOS_intro}) implies
\[
\left\|  \frac{O(f,t)}{\psi(t)}\right\|  _{X}
\leq\|A(f,\cdot)\|_{X},
\]
showing that oscillation functionals play a central role in many
embedding problems, including Sobolev, fractional Sobolev,
Besov-type and Haj\l asz-type embeddings, as well as interpolation
theory.

\medskip

Motivated by this framework, for an r.i. space $X$, an admissible
function $\psi$, and $0<r\le1$, we consider the oscillation space
\[
LS_{r}(X,\psi)=\left\{  f\in L^{0} :
\frac{O(|f|^{r},t)^{1/r}}{\psi(t)}\in X \right\}  .
\]

Although $LS_{r}(X,\psi)$ is itself rearrangement-invariant, it is
in general neither linear nor a lattice, and its defining functional
is not equivalent to a norm (see, e.g., \cite{CKMP,CGMP,KP,Ku}).
This reflects the nonlinear nature of the oscillation operator
$f\mapsto O(|f|^{r},t)^{1/r}$ and makes the analysis of
$LS_{r}(X,\psi)$ substantially more delicate than in the classical
r.i. setting. The role of the parameter $r$ becomes especially
transparent in the quasi-Banach setting; see
Section~\ref{sec:quasi}.

Our aim in this paper is to characterize those r.i.\ spaces $Y$ for
which
\begin{equation}
LS_{r}(X,\psi)\hookrightarrow Y. \label{main_embedding_intro}%
\end{equation}
Such estimates describe the gain of integrability produced by
control of the oscillation functional.

\medskip

The embedding (\ref{main_embedding_intro}) is governed by the
interaction between the growth of $\psi$ and the geometry of $X$,
encoded by the quotient
\begin{equation}
\frac{\psi(t)}{\varphi_{X}(t)},\qquad0<t<1, \label{quotient}%
\end{equation}
where $\varphi_{X}$ denotes the fundamental function of $X$. This
leads naturally to three qualitatively different regimes.

\medskip

\begin{itemize}
\item \emph{Supercritical regime.} The quotient (\ref{quotient}) is
sufficiently large, and the oscillation condition forces essentially
$L^{\infty}$-type behaviour (see Subsection~\ref{superc}).

\item \emph{Subcritical regime.} The quotient (\ref{quotient}) is dominated by
the geometry of $X$, and the embedding reduces to a maximal-type
description. In the classical $L^{p}$ setting, this recovers
Lorentz-type targets (see Subsection~\ref{sec:subcritical}).

\item \emph{Critical regime.} This is the borderline situation, where the
quotient (\ref{quotient}) no longer yields a purely power-type
description and logarithmic corrections appear (see
Subsection~\ref{sec:critical}).
\end{itemize}

The critical regime is the most delicate one, since the quotient
(\ref{quotient}) no longer has a simple power-type behaviour and
additional information is needed. In this case, (\ref{quotient}) is
replaced by the deviation function
\[
M(t):=\sup_{t<u<1}\frac{\psi(u)}{\varphi_{X}(u)},\qquad0<t<1.
\]
The asymptotic behaviour of \(M\) determines the endpoint form of
the embedding. When \(M\) is bounded, the critical estimate leads to
Trudinger-type consequences. In the natural borderline case in which
\(M\) has no residual power contribution, namely
\[
\underline{\beta}_M=\overline{\beta}_M=0,
\]
where \(\underline{\beta}_M\) and \(\overline{\beta}_M\) denote the
lower and upper fundamental indices of \(M\), the corresponding
endpoint targets are of Hansson type; see
Theorems~\ref{thm:hansson_Lalpha} and \ref{thm:hansson_convexified}.
More precisely, for each r.i.\ space satisfying an \(\alpha\)-lower
estimate we obtain a canonical \(L^\alpha\)-based Hansson-type
target whose norm is of the form
\[
\left(  \int_{0}^{1} \left(  \frac{f^{**}(t)}
{(\log(e/t))^{1/r}M(t)} \right) ^{\alpha}\frac{dt}{t} \right)
^{1/\alpha}.
\]
This provides an explicit logarithmic endpoint improvement in the
critical regime. In addition, when the space \(X\) satisfies a
compatible upper estimate, we obtain a second Hansson-type target
adapted to the geometry of \(X\), obtained by a suitable
convexification of \(X\). When the upper estimate of \(X\) already
matches the lower estimate used in the argument, this target reduces
to the Hansson-type space built directly on \(X\).

A further advantage of these Hansson-type targets is that they are
given explicitly in terms of the fundamental function of the
underlying space and the deviation function \(M\). By contrast,
abstract characterizations of optimal targets in terms of Hardy-type
operators (see Section~\ref{sec:hardy}) are often difficult to use
in concrete situations, since each family of spaces typically
requires a separate analysis. Our construction provides a unified
description valid for a large class of r.i. spaces.

The paper is organized as follows. Section~\ref{sec:background}
collects basic material on rearrangements, rearrangement-invariant
spaces, Boyd indices and growth indices. Section~\ref{sec:hardy}
establishes the equivalence between oscillation inequalities and the
boundedness of the associated Hardy-type operators.

Section~\ref{sec:main} contains the main embedding results,
organized into the supercritical, subcritical and critical regimes.
In the critical case we obtain Trudinger-type consequences and
explicit Hansson-type targets, including a canonical target and a
convexified target adapted to the geometry of the initial space.
Finally, Section~\ref{sec:quasi} explains how the Banach theory
extends to the quasi-Banach setting by means of $r$-convexification.
The appendix contains several auxiliary proofs.

\section{Background}

\label{sec:background}

We briefly collect notation and standard facts concerning
rearrangement-invariant (r.i.) spaces. Unless otherwise stated, all
function spaces are considered over the interval \((0,1)\) endowed
with Lebesgue measure. For further background we refer to
\cite{BS,LT2,MN,KPS,Boyd,BGT}. The material presented here provides
the structural framework for the analysis of the oscillation
inequalities and Hardy-type operators studied in the sequel.

Throughout the paper, \(A \preceq B\) means \(A \le C B\) for some
constant \(C>0\) independent of the relevant functions. We write
\(A\simeq B\) if both \(A\preceq B\) and \(B\preceq A\) hold. We
also say that a function \(f\) is almost increasing (almost
decreasing) if it is equivalent to an increasing (decreasing)
function.

\subsection{Rearrangements and r.i.\ spaces}

Let $(0,1)$ be endowed with Lebesgue measure. We denote by
$L^{0}(0,1)$ the space of all measurable functions on $(0,1)$ which
are finite almost everywhere and identified up to equality almost
everywhere. For $f\in L^{0}(0,1)$, its decreasing rearrangement is
\[
f^{\ast}(s)=\inf\{t>0:|\{x\in(0,1):|f(x)|>t\}|\leq s\},\quad
s\in(0,1).
\]
Associated to $f^{\ast}$, we consider the maximal function
\[
f^{\ast\ast}(t)=\frac{1}{t}\int_{0}^{t}f^{\ast}(s)\,ds,\qquad0<t<1,
\]
and, for $0<r\leq1$, the $r$-oscillation of $f^{\ast}$ defined by
\[
O(|f|^{r},t):=(|f|^{r})^{\ast\ast}(t)-(|f|^{r})^{\ast}(t).
\]
Notice that
\begin{equation}
\frac{d}{dt}(|f|^{r})^{\ast\ast}(t)=-\frac{O(|f|^{r},t)}{t}. \label{der2est}%
\end{equation}

A Banach function space $X$ is called rearrangement-invariant (r.i.)
if $\Vert f\Vert_{X}=\Vert g\Vert_{X}$ whenever $f^{\ast}=g^{\ast}$,
and if $|f|\leq|g|$ implies $\Vert f\Vert_{X}\leq\Vert g\Vert_{X}$.

The associate space $X^{\prime}$ of $X$ is defined by
\[
\|g\|_{X^{\prime}}=\sup_{\|f\|_{X}\le1}\int_{0}^{1} |f(s) g(s)|\,ds.
\]
It is also an r.i. space, and in fact the associate norm can be
obtained using only decreasing functions, namely
\begin{equation}
\|g\|_{X^{\prime}}=\sup_{\|f\|_{X}\le1}\int_{0}^{1}
f^{*}(t)g^{*}(t)\,dt.
\label{HolRea}%
\end{equation}
Furthermore, the following H\"{o}lder-type inequality holds
\begin{equation}
\int_{0}^{1} |f(s)g(s)|\,ds\le\|f\|_{X}\|g\|_{X^{\prime}}. \label{Holder}%
\end{equation}

A useful tool in the study of an r.i. space $X$ is the fundamental
function of $X$ defined by
\[
\varphi_{X}(t)=\|\chi_{(0,t)}\|_{X},\qquad0<t<1.
\]
This function is increasing with $\varphi_{X}(0^{+})=0$. For
example, if $X=L^{p}(0,1)$, then $\varphi_{L^{p}}(t)=t^{1/p}$. The
function $\varphi_{X}$ is quasi-concave and satisfies the duality
relation
\begin{equation}
\varphi_{X}(t)\varphi_{X^{\prime}}(t)=t. \label{funddual}%
\end{equation}

Let $p>0$ and let $X$ be an r.i. space; the $p$-convexification
$X^{(p)}$ of $X$ (see \cite{LT2,JS}) is defined by
\[
X^{(p)}=\{f:|f|^{p}\in X\},\qquad \Vert f\Vert_{X^{(p)}}=\Vert
|f|^{p}\Vert _{X}^{1/p}.
\]
It follows that
\[
\varphi_{X^{(p)}}(t)=\left(\varphi_{X}(t)\right)^{1/p}.
\]
If $p\geq1$, then $X^{(p)}$ is again an r.i. space.

We say that $X$ satisfies an upper (resp. lower) $\alpha$-estimate
if there exists a constant $C_{\alpha}>0$ such that for every finite
family of functions with pairwise disjoint supports
$\{f_{i}\}_{i=1}^{n}$ one has
\begin{align*}
\left\Vert \left(  \sum_{i=1}^{n}|f_{i}|^{\alpha}\right)  ^{1/\alpha
}\right\Vert _{X}  &  \leq C_{\alpha}\left(  \sum_{i=1}^{n}\Vert f_{i}%
\Vert_{X}^{\alpha}\right)  ^{1/\alpha},\\
\left(  \sum_{i=1}^{n}\Vert f_{i}\Vert_{X}^{\alpha}\right)
^{1/\alpha}  & \leq C_{\alpha}\left\Vert \left(
\sum_{i=1}^{n}|f_{i}|^{\alpha}\right) ^{1/\alpha}\right\Vert _{X}.
\end{align*}

\begin{remark}
\label{rem:convexification_estimates} If \(X\) satisfies an
\(\alpha\)-lower estimate, then \(X^{(p)}\) satisfies an \(\alpha
p\)-lower estimate. Similarly, if \(X\) satisfies a \(\rho\)-upper
estimate, then \(X^{(p)}\) satisfies a \(\rho p\)-upper estimate.
Indeed, for pairwise disjoint functions \(g_i\),
\[
\left\|\sum_i g_i\right\|_{X^{(p)}}^p = \left\|\sum_i
|g_i|^p\right\|_X,
\]
and the conclusion follows directly from the corresponding estimate
in \(X\).
\end{remark}

\subsection{The fundamental indices}

Let $\mathcal{A}$ be the class of positive functions
$\psi:(0,1)\to(0,\infty)$ such that
\begin{equation*}
m_{\psi}(t)=\sup_{\substack{0<s<1\\st<1}}\frac{\psi(st)}{\psi(s)}<\infty,
\qquad t>0. 
\end{equation*}
The function $m_{\psi}(t)$ is submultiplicative. Hence, by the
standard theory of submultiplicative functions, the following limits
exist (possibly infinite), and moreover coincide with the
corresponding supremum and infimum:
\begin{equation}
\underline{\beta}_{\psi}=\lim_{t\rightarrow0^{+}}\frac{\log
m_{\psi}(t)}{\log t}=\sup_{0<t<1}\frac{\log m_{\psi}(t)}{\log
t},\;\overline{\beta}_{\psi}
=\inf_{t>1}\frac{\log m_{\psi}(t)}{\log t} =\lim_{t\rightarrow\infty}%
\frac{\log m_{\psi}(t)}{\log t}. \label{fundind}%
\end{equation}
It is well known that if $\psi$ is increasing, then
\[
0\leq\underline{\beta}_{\psi}\leq\overline{\beta}_{\psi}\leq\infty.
\]

We denote by $\mathcal{A}_{0}$ the subclass of increasing functions
$\psi \in\mathcal{A}$ such that $\psi(0^{+})=0$ and
\[
0<\underline{\beta}_{\psi}\le\overline{\beta}_{\psi}<1.
\]

Given an r.i.\ space $X$ on $(0,1)$, the Zippin indices (see
\cite{Zipp}) of $X$ are defined as the fundamental indices of its
fundamental function $\varphi_{X}$.

\subsection{Boyd indices}

These indices were introduced by D.\thinspace W.~Boyd \cite{Boyd}
and govern the boundedness of Hardy-type operators and related
embeddings.

Let $X$ be an r.i. space on $(0,1)$. For $s>0$ we define the
dilation operator by
\[
(E_{s}f)(t)=
\begin{cases}
f(t/s), & 0<t<\min\{1,s\},\\
0, & \min\{1,s\}\le t<1.
\end{cases}
\]

The dilation function of $X$ is defined by
\[
h_{X}(s)=\|E_{s}\|_{X\to X}, \qquad s>0 .
\]

The lower and upper Boyd indices of $X$ are defined by
\begin{equation*}
\underline{\alpha}_{X}= \lim_{s\to0^{+}} \frac{\log h_{X}(s)}{\log
s}, \qquad\overline{\alpha}_{X}= \lim_{s\to\infty} \frac{\log
h_{X}(s)}{\log s}.
\end{equation*}
It follows from the definition of the Boyd indices that, for every
\(\varepsilon>0\), there exists a constant \(C_{\varepsilon}>0\)
such that
\begin{equation}
h_X(s)\le C_{\varepsilon}s^{\underline{\alpha}_X-\varepsilon},
\qquad 0<s<1, \label{eq:boyd_lower_estimate_h}
\end{equation}
and
\begin{equation*}
h_X(s)\le C_{\varepsilon}s^{\overline{\alpha}_X+\varepsilon}, \qquad
s>1. 
\end{equation*}

These indices satisfy
\[
0\leq\underline{\alpha}_{X}\leq\overline{\alpha}_{X}\leq1.
\]
Moreover,
\begin{equation}
\underline{\alpha}_{X^{\prime}}=1-\overline{\alpha}_{X},\qquad
\overline{\alpha}_{X^{\prime}}=1-\underline{\alpha}_{X}, \qquad
\underline{\alpha}_{X^{(p)}}=\frac{\underline{\alpha}_{X}}{p},
\qquad \overline{\alpha}_{X^{(p)}}=\frac{\overline{\alpha}_{X}}{p}.
\label{eq:boyd_dual_convexification}
\end{equation}
The relation between the Boyd indices and the fundamental indices of
$X$ is
\[
0\leq\underline{\alpha}_{X}\leq\underline{\beta}_{\varphi_{X}}\leq
\overline{\beta}_{\varphi_{X}}\leq\overline{\alpha}_{X}\leq1.
\]


\section{Embeddings and Hardy-type operators}

\label{sec:hardy}

To prove the main theorem of this section, we shall need the
following technical result. In the special case $r=1$ and $\psi$ a
power function, this estimate was proved in \cite{MP}. We show that
the same conclusion remains valid for general
$\psi\in\mathcal{A}_{0}$ and $0<r\leq1$. For completeness, we
include the proofs in the Appendix.

\begin{lemma}
\label{Mpus} For $0<r\leq1,$ consider the operators $P_{r}$ defined
by
\[
P_{r}f(t)=\left(  \frac{1}{t}\int_{0}^{t}|f(s)|^{r}\,ds\right)
^{1/r},\qquad t>0.
\]
Let $\psi\in\mathcal{A}_{0}$ and let $Y$ be an r.i. space. Then
there exists a constant $C_{\psi}<\infty$ such that
\[
\left\Vert \frac{P_{r}f}{\psi}\right\Vert _{Y}\leq
C_{\psi}\,\left\Vert \frac{f}{\psi}\right\Vert _{Y},\qquad f\in
L^{0}.
\]

\end{lemma}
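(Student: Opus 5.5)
The plan is to reduce to the case $r=1$ and then treat $P_1$ as an average of dilations of $f/\psi$. This uses only the lower fundamental index of $\psi$ and the trivial bound on the dilation function of $Y$, so no assumption on the Boyd indices of $Y$ is needed.

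\emph{Step 1 (reduction to $r=1$).} For $0<r\le1$, Jensen's inequality (the power-mean inequality) for the probability measure $ds/t$ on $(0,t)$ gives
\[
P_rf(t)=\left(\frac1t\int_0^t|f(s)|^r\,ds\right)^{1/r}\le \frac1t\int_0^t|f(s)|\,ds=P_1f(t).
\]
By the lattice property of $Y$ it therefore suffices to prove $\|P_1f/\psi\|_Y\le C_\psi\|f/\psi\|_Y$.

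\emph{Step 2 (change of variables and the submultiplicative majorant).} Put $g=f/\psi$ and substitute $s=ut$:
\[
\frac{P_1f(t)}{\psi(t)}=\int_0^1|g(ut)|\,\frac{\psi(ut)}{\psi(t)}\,du\le\int_0^1 |g(ut)|\,m_\psi(u)\,du,
\]
where the inequality uses the definition of $m_\psi$ with $s=t$ and $ut<1$. Then apply Minkowski's integral inequality in the Banach function space $Y$:
\[
\left\|\frac{P_1f}{\psi}\right\|_Y\le\int_0^1 m_\psi(u)\,\|g(u\,\cdot)\|_Y\,du .
\]
To justify this, dualize against $Y'$ via (\ref{Holder}) and apply Fubini; measurability is standard.

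\emph{Step 3 (dilation and index estimates).} The function $t\mapsto g(ut)\chi_{(0,1)}(t)$ depends only on $g$ restricted to $(0,u)$. Its distribution function is $u^{-1}$ times that of $g\chi_{(0,u)}$, so its rearrangement is $(g\chi_{(0,u)})^*(u\,\cdot)$. This is dominated by the dilation $E_{1/u}$ applied to $g^*$, so
\[
\|g(u\,\cdot)\|_Y\le h_Y(1/u)\|g\|_Y\le \frac1u\|g\|_Y,
\]
using the elementary bound $h_Y(s)\le\max(1,s)$. Since $\underline\beta_\psi>0$ and $\underline\beta_\psi=\lim_{u\to0^+}\log m_\psi(u)/\log u$ by (\ref{fundind}), for $0<\varepsilon<\underline\beta_\psi$ we get $m_\psi(u)\le C_\varepsilon u^{\underline\beta_\psi-\varepsilon}$ on $(0,1)$. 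Here $m_\psi$ is bounded near $u=1$ because $\psi$ is increasing, so $m_\psi\le1$ on $(0,1)$. Hence
\[
\left\|\frac{P_1f}{\psi}\right\|_Y\le C_\varepsilon\|g\|_Y\int_0^1u^{\underline\beta_\psi-\varepsilon-1}\,du=C_\psi\left\|\frac f\psi\right\|_Y .
\]

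\emph{Main obstacle.} The only delicate point is Step 3: the bound $\|g(u\,\cdot)\|_Y\le u^{-1}\|g\|_Y$ must hold for an arbitrary (non-decreasing) $g$. This requires checking the rearrangement identity above and that $\|E_s\|\le s$ for $s\ge1$ in any r.i. space. The latter follows by interpolation between $L^1$ (norm $s$) and $L^\infty$ (norm $1$) via Calder\'on's theorem, or directly by noting $(E_sf)^{**}\le s\,f^{**}$. Everything else is Jensen's inequality plus Minkowski's integral inequality. The condition $\underline\beta_\psi>0$ is exactly what makes $\int_0^1 m_\psi(u)u^{-1}\,du$ finite. The upper index condition $\overline\beta_\psi<1$ in the definition of $\mathcal A_0$ is not needed for this direction.
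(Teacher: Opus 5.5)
Your proof is correct and follows the paper's argument. Both use Jensen's inequality to pass from $P_r$ to the plain average, rescale with the bound $\psi(ut)/\psi(t)\le m_\psi(u)$, apply Minkowski's integral inequality with the dilation bound $h_Y(1/u)\le 1/u$, and conclude from the integrability of $m_\psi(u)/u$ on $(0,1)$, which follows from $\underline\beta_\psi>0$. The only cosmetic difference is that you apply Jensen first, reducing to $P_1$, whereas the paper applies it after the substitution $s\mapsto st$.
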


The embedding problem studied in this paper is related to the
Hardy-type operators
\[
\overline{Q}_{\psi,r}f(t)=\left(
\int_{t}^{1}(\psi(s)|f(s)|)^{r}\,\frac
{ds}{s}\right)  ^{1/r}%
\]
and
\[
\overline{T}_{\psi,r}h(t)=\int_{t}^{1}\psi(s)^{r}h(s)\,\frac{ds}{s}.
\]

\begin{theorem}
\label{inclu} Let $0<r\leq1$, let $X$ and $Y$ be r.i.\ spaces and
let $\psi \in\mathcal{A}_{0}$. The following are equivalent:

\begin{enumerate}
\item There exists a constant $C>0$ such that, for every measurable $f$,
\begin{equation}
\Vert f\Vert_{Y}\leq C\left(  \left\Vert \frac{O(|f|^{r},\cdot)^{1/r}}%
{\psi(\cdot)}\right\Vert _{X}+\Vert f\Vert_{L^{r}}\right)  . \label{bbb}%
\end{equation}

\item There exists a constant $C>0$ such that, for every $h\in X^{(1/r)}$,
\[
\|\overline{T}_{\psi,r}h\|_{Y^{(1/r)}} \le C\|h\|_{X^{(1/r)}}.
\]

\item There exists a constant $C>0$ such that, for every $f\in X$,
\[
\|\overline{Q}_{\psi,r}f\|_{Y} \le C\|f\|_{X}.
\]

\end{enumerate}
\end{theorem}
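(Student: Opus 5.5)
The plan is to prove (2)$\Leftrightarrow$(3) by a convexification identity, then (3)$\Rightarrow$(1) via the derivative formula (\ref{der2est}), and (1)$\Rightarrow$(2) by testing (\ref{bbb}) on a suitably constructed decreasing function, where Lemma~\ref{Mpus} is used.

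\emph{(2)$\Leftrightarrow$(3).} For $f\in X$ put $h=|f|^{r}$. Directly from the definitions,
\[
\overline{Q}_{\psi,r}f(t)^{r}=\int_t^1\psi(s)^r|f(s)|^r\frac{ds}{s}=\overline{T}_{\psi,r}h(t).
\]
Since $\|u\|_{Y^{(1/r)}}=\||u|^{1/r}\|_{Y}^{r}$, this gives $\|\overline{Q}_{\psi,r}f\|_Y=\|\overline{T}_{\psi,r}h\|_{Y^{(1/r)}}^{1/r}$. Likewise $\|f\|_X=\|h\|_{X^{(1/r)}}^{1/r}$. Hence (2) restricted to $h\ge0$ is exactly (3) with constant $C^{1/r}$. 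Because $\overline{T}_{\psi,r}$ is a positive operator, $|\overline{T}_{\psi,r}h|\le\overline{T}_{\psi,r}|h|$, so the general case of (2) reduces to $h\ge 0$.

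\emph{(3)$\Rightarrow$(1).} Let $g=|f|^r$ and assume $f\in L^r$; otherwise the right side of (\ref{bbb}) is infinite. Integrating (\ref{der2est}) from $t$ to $1$ gives
\[
g^{**}(t)=g^{**}(1)+\int_t^1 O(g,s)\frac{ds}{s}.
\]
Using $g^*\le g^{**}$ and $g^{**}(1)=\|f\|_{L^r}^r$, we get
\[
g^{*}(t)\le \|f\|_{L^r}^r+\int_t^1\psi(s)^r\Big(\frac{O(g,s)^{1/r}}{\psi(s)}\Big)^r\frac{ds}{s}.
\]
Take $1/r$-th powers and use $(a+b)^{1/r}\le 2^{1/r-1}(a^{1/r}+b^{1/r})$. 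This yields
\[
f^*(t)\le C\Big(\|f\|_{L^r}+\overline{Q}_{\psi,r}\big(O(g,\cdot)^{1/r}/\psi\big)(t)\Big).
\]
Taking $Y$-norms and applying (3) gives (\ref{bbb}), with constant involving $\varphi_Y(1)$.

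\emph{(1)$\Rightarrow$(2).} Fix $0\le h\in X^{(1/r)}$ and define the decreasing function
\[
F(t)=\overline{T}_{\psi,r}h(t)=\int_t^1\psi(u)^rh(u)\frac{du}{u}.
\]
Let $f=F^{1/r}$, so $|f|^r=F=F^*$. A Fubini computation gives
\[
O(F,t)=\frac1t\int_0^t\big(F(s)-F(t)\big)ds=\frac1t\int_0^t\psi(u)^rh(u)\,du,
\]
and therefore
\[
\frac{O(|f|^r,t)^{1/r}}{\psi(t)}=\frac{P_r(\psi h^{1/r})(t)}{\psi(t)}.
\]
By Lemma~\ref{Mpus}, applied with the r.i.\ space $X$ in place of $Y$, the $X$-norm of this is at most
\[
C_\psi\|h^{1/r}\|_X=C_\psi\|h\|_{X^{(1/r)}}^{1/r}.
\]
For the $L^r$ term, $\int_0^1F=\int_0^1\psi^rh\le\psi(1)^r\|h\|_{L^1}$. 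Since $X^{(1/r)}$ is an r.i.\ space on $(0,1)$, it embeds in $L^1$, so $\int_0^1 F\preceq\|h\|_{X^{(1/r)}}$. Then (1) gives $\|F^{1/r}\|_Y\preceq\|h\|_{X^{(1/r)}}^{1/r}$, which is $\|\overline{T}_{\psi,r}h\|_{Y^{(1/r)}}\preceq\|h\|_{X^{(1/r)}}$.

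The main obstacle is this last step. One must check that $F$ is finite almost everywhere, so that $f\in L^0$; this follows from $\int_0^1F<\infty$. The essential input is the identification of the oscillation of $F$ with a $P_r$-average, which is precisely where Lemma~\ref{Mpus} and the hypothesis $\psi\in\mathcal A_0$ are needed.
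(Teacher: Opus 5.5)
Your proof is correct and follows the paper's argument essentially step for step: the convexification identity for (2)$\Leftrightarrow$(3), integration of (\ref{der2est}) from $t$ to $1$ for (3)$\Rightarrow$(1), and testing (\ref{bbb}) on the decreasing function $(\overline{T}_{\psi,r}h)^{1/r}$, whose oscillation you compute by Fubini and control with Lemma~\ref{Mpus}. The paper closes the cycle with (1)$\Rightarrow$(3) instead of your (1)$\Rightarrow$(2), but this is cosmetic, since your test function is exactly the paper's $\overline{Q}_{\psi,r}f$ with $h=|f|^{r}$; your remarks on the positivity of $\overline{T}_{\psi,r}$ and on the finiteness of $F$ supply details the paper leaves implicit.
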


\begin{proof}
$(ii)\Leftrightarrow(iii)$. Since
\[
\overline{Q}_{\psi,r}f(t)=\left(
\overline{T}_{\psi,r}(|f|^{r})(t)\right) ^{1/r},
\]
and, by the definition of convexification,
\[
\Vert\overline{Q}_{\psi,r}f\Vert_{Y}^{r} =\Vert\overline{T}_{\psi,r}%
(|f|^{r})\Vert_{Y^{(1/r)}},
\]
it follows that
\[
\overline{Q}_{\psi,r}:X\rightarrow Y\text{ is bounded} \quad
\Longleftrightarrow\quad\overline{T}_{\psi,r}:X^{(1/r)}\rightarrow
Y^{(1/r)}\text{ is bounded}.
\]

It remains to prove $(iii)\Leftrightarrow(i)$.

$(iii)\Rightarrow(i)$. Let $f\in L^{0}$ and assume that
\[
\left\Vert \frac{O(|f|^{r},\cdot)^{1/r}}{\psi(\cdot)}\right\Vert
_{X} +\|f\|_{L^{r}}<\infty.
\]
By (\ref{der2est}) and the Fundamental Theorem of Calculus, we
obtain
\begin{align*}
(|f|^{r})^{\ast\ast}(t)  &  = \int_{t}^{1}\left( (|f|^{r})^{\ast\ast
}(s)-(|f|^{r})^{\ast}(s)\right)  \frac{ds}{s} +(|f|^{r})^{\ast\ast}(1)\\
&  = \int_{t}^{1} O(|f|^{r},s)\frac{ds}{s} +(
|f|^{r})^{\ast\ast}(1).
\end{align*}
Hence
\begin{align*}
\left(  (|f|^{r})^{\ast\ast}(t)\right)  ^{1/r}  &  \preceq\left(
\int_{t}^{1} O(|f|^{r},s)\frac{ds}{s}\right)  ^{1/r} +\left(
(|f|^{r})^{\ast\ast
}(1)\right)  ^{1/r}\\
&  = \left(  \int_{t}^{1}\psi^{r}(s)\left(
\frac{O(|f|^{r},s)^{1/r}}{\psi (s)}\right)  ^{r}\frac{ds}{s}\right)
^{1/r} +\|f\|_{L^{r}}.
\end{align*}
Therefore,
\begin{align*}
\Vert f\Vert_{Y}  &  \leq\left\Vert \left(
(|f|^{r})^{\ast\ast}(t)\right)
^{1/r}\right\Vert _{Y}\\
&  \preceq\left\Vert \left(  \int_{t}^{1}\psi^{r}(s)\left(  \frac
{O(|f|^{r},s)^{1/r}}{\psi(s)}\right)  ^{r}\frac{ds}{s}\right)  ^{1/r}%
\right\Vert _{Y} +\|f\|_{L^{r}}\\
&  = \left\Vert \overline{Q}_{\psi,r}\left(  \frac{O(|f|^{r},\cdot)^{1/r}%
}{\psi(\cdot)}\right)  \right\Vert _{Y} +\|f\|_{L^{r}}\\
&  \preceq\left\Vert
\frac{O(|f|^{r},\cdot)^{1/r}}{\psi(\cdot)}\right\Vert _{X}
+\|f\|_{L^{r}}.
\end{align*}

$(i)\Rightarrow(iii)$. Let $f\in X$. Since $\overline{Q}_{\psi,r}f$
is positive and decreasing, $\left(  \overline{Q}_{\psi,r}f\right)
^{\ast}(t) =\overline{Q}_{\psi,r}f(t)$ and hence, by Fubini's
theorem,
\begin{align*}
O(\left(  \overline{Q}_{\psi,r}f\right)  ^{r},t)  &  =\frac{1}{t}\int_{0}%
^{t}\int_{s}^{1}(\psi(u)|f(u)|)^{r}\,\frac{du}{u}\,ds -\left(
\overline
{Q}_{\psi,r}f\right)  ^{r}(t)\\
&  =\frac{1}{t}\int_{0}^{t}(\psi(u)|f(u)|)^{r}\,du +\int_{t}^{1}%
(\psi(u)|f(u)|)^{r}\,\frac{du}{u} -\left(
\overline{Q}_{\psi,r}f\right)
^{r}(t)\\
&  =\frac{1}{t}\int_{0}^{t}(\psi(u)|f(u)|)^{r}\,du.
\end{align*}
Therefore,
\begin{align*}
\Vert\overline{Q}_{\psi,r}f\Vert_{Y}  &  \leq C\left\Vert
\frac{O(|\overline
{Q}_{\psi,r}f|^{r},\cdot)^{1/r}}{\psi(\cdot)}\right\Vert _{X}
+C\|\overline
{Q}_{\psi,r}f\|_{L^{r}} \qquad\text{(by (\ref{bbb}))}\\
&  = C\left\Vert \left(  \frac{1}{t\,\psi(t)^{r}}\int_{0}^{t}(\psi
(s)|f(s)|)^{r}\,ds\right)  ^{1/r}\right\Vert _{X}
+C\|\overline{Q}_{\psi
,r}f\|_{L^{r}}\\
&  = C\left\Vert \frac{P_{r}(\psi f)(t)}{\psi(t)}\right\Vert _{X}
+C\|\overline{Q}_{\psi,r}f\|_{L^{r}}\\
&  \preceq\|f\|_{X}+ \|\overline{Q}_{\psi,r}f\|_{L^{r}},
\end{align*}
where in the last step we used Lemma~\ref{Mpus}.

To estimate the second term, by Fubini's theorem,
\[
\Vert\overline{Q}_{\psi,r}f\Vert_{L^{r}}^{r}=\int_{0}^{1}\int_{t}^{1}%
(\psi(s)|f(s)|)^{r}\,\frac{ds}{s}\,dt=\int_{0}^{1}(\psi(s)|f(s)|)^{r}%
\,ds\leq\Vert\psi\Vert_{L^{\infty}}^{r}\Vert f\Vert_{L^{r}}^{r}.
\]
Using that every r.i.\ space on $(0,1)$ is continuously embedded
into $L^{1}$ and that $0<r\le1$, we get
\[
\|\overline{Q}_{\psi,r}f\|_{L^{r}}\preceq\|f\|_{L^{r}} \preceq\|f\|_{L^{1}%
}\preceq\|f\|_{X}.
\]
Combining the previous estimates, we conclude that
\[
\Vert\overline{Q}_{\psi,r}f\Vert_{Y}\preceq\Vert f\Vert_{X}.
\]
This proves that $\overline{Q}_{\psi,r}:X\rightarrow Y$ is bounded.
\end{proof}

\begin{proposition}
\label{CorOpti}

Let $0<r\leq1$. Let $X$ be an r.i.\ space and let $Z_{r}$ be the
r.i.\ space whose associate space $(Z_{r})^{\prime}$ is defined by
\begin{equation}
\Vert g\Vert_{(Z_{r})^{\prime}}:=\left\Vert
\psi(\cdot)^{r}\,g^{\ast\ast
}(\cdot)\right\Vert _{(X^{(1/r)})^{\prime}}. \label{Zrpsi}%
\end{equation}
Then $Z_{r}$ is the optimal r.i.\ range for the operator $\overline{T}%
_{\psi,r}$ in the sense that
\begin{equation}
\overline{T}_{\psi,r}:X^{(1/r)}\rightarrow Z_{r}\quad\text{is
bounded},
\label{B1}%
\end{equation}
and if $\overline{T}_{\psi,r}:X^{(1/r)}\rightarrow Y^{(1/r)}$ is
bounded for some r.i.\ space $Y$, then
\[
Z_{r}\hookrightarrow Y^{(1/r)}.
\]

Consequently, if we set $Z:=(Z_{r})^{(r)}$, then $Z$ is the optimal
r.i.\ range for $\overline Q_{\psi,r}$, that is,
\[
\overline Q_{\psi,r}:X\to Z \quad\text{is bounded},
\]
and if $\overline Q_{\psi,r}:X\to Y$ is bounded for some r.i.\ space
$Y$, then
\[
Z\hookrightarrow Y.
\]

\end{proposition}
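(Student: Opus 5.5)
The plan is to argue by duality, using that $\overline{T}_{\psi,r}$ is, up to the change of measure, the adjoint of the averaging-type operator $g\mapsto \psi(t)^{r}\frac{1}{t}\int_0^t g$. The first step is the Fubini identity: for nonnegative $h$ and $g$,
\[
\int_0^1 \overline{T}_{\psi,r}h(t)\,g(t)\,dt=\int_0^1 h(s)\psi(s)^r\frac{1}{s}\int_0^s g(t)\,dt\,ds .
\]
We also need that $\overline{T}_{\psi,r}h$ is decreasing in $t$ when $h\ge0$. Hence $(\overline{T}_{\psi,r}|h|)^*=\overline{T}_{\psi,r}|h|$ and $|\overline{T}_{\psi,r}h|\le \overline{T}_{\psi,r}|h|$. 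So it suffices to treat $h\ge0$.

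Before that, check that \eqref{Zrpsi} defines an r.i. Banach function norm. It is a norm because $g\mapsto g^{**}$ is subadditive and the outer norm is a lattice norm. It is rearrangement invariant and monotone because it depends only on $g^{**}$, which is monotone in $|g|$. Fatou follows from monotone convergence of $g_n^{**}$ and the Fatou property of $(X^{(1/r)})'$. For $\chi_E$ and local integrability we need $0<\|\psi^r\chi_{(0,|E|)}^{**}\|<\infty$. This holds since $\psi$ is bounded on $(0,1)$ (it is increasing with indices $<1$), and because $\chi_{(0,a)}^{**}\ge \min(1,a/t)$. Here $X^{(1/r)}$ is a genuine r.i. space since $1/r\ge1$. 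Define $Z_r$ as the associate of this space, so that $Z_r''=Z_r$ by Lorentz–Luxemburg.

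For \eqref{B1}, use the Fubini identity. Since $\int_0^s g\le \int_0^s g^*$, we get $\int \overline{T}_{\psi,r}h\, g\le \int h\,\psi^r g^{**}\le \|h\|_{X^{(1/r)}}\|\psi^r g^{**}\|_{(X^{(1/r)})'}$ by \eqref{Holder}. Taking the supremum over $\|g\|_{(Z_r)'}\le1$ gives $\|\overline{T}_{\psi,r}h\|_{Z_r}\le\|h\|_{X^{(1/r)}}$.

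For optimality, assume $\overline{T}_{\psi,r}:X^{(1/r)}\to Y^{(1/r)}$ is bounded with constant $C$. It suffices to show $(Y^{(1/r)})'\hookrightarrow (Z_r)'$, since dualizing again gives $Z_r\hookrightarrow Y^{(1/r)}$. Take $g\ge0$; we may replace $g$ by $g^*$ because both norms are r.i. By \eqref{HolRea} applied to $(X^{(1/r)})'$, computing $\|\psi^r g^{**}\|_{(X^{(1/r)})'}$ only requires testing against decreasing $h\ge0$ with $\|h\|_{X^{(1/r)}}\le1$; the function $\psi^r g^{**}$ need not be decreasing, and this is the delicate point. I plan to use that the associate norm of a nonnegative function equals the supremum of $\int h\,\psi^r g^{**}$ over all $h\ge0$ in the unit ball; restricting to decreasing $h$ is not needed. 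For any such $h$, the Fubini identity reversed gives
\[
\int h\,\psi^r (g^*)^{**}=\int \overline{T}_{\psi,r}h\; g^*\le \|\overline{T}_{\psi,r}h\|_{Y^{(1/r)}}\|g\|_{(Y^{(1/r)})'}\le C\|g\|_{(Y^{(1/r)})'} .
\]
This yields $\|g\|_{(Z_r)'}\le C\|g\|_{(Y^{(1/r)})'}$. So the only real obstacle is verifying the function-norm axioms, not the duality.

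Finally, transfer the result to $\overline Q_{\psi,r}$ via the identity $\|\overline Q_{\psi,r}f\|_Y^r=\|\overline T_{\psi,r}(|f|^r)\|_{Y^{(1/r)}}$ from the proof of Theorem~\ref{inclu}. With $Z=(Z_r)^{(r)}$ we have $Z^{(1/r)}=Z_r$, so $\|\overline Q_{\psi,r}f\|_Z^r=\|\overline T_{\psi,r}|f|^r\|_{Z_r}\le \||f|^r\|_{X^{(1/r)}}=\|f\|_X^r$. If $\overline Q_{\psi,r}:X\to Y$ is bounded, then $\overline T_{\psi,r}:X^{(1/r)}\to Y^{(1/r)}$ is bounded, so $Z_r\hookrightarrow Y^{(1/r)}$. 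Taking $r$-th roots gives $Z\hookrightarrow Y$. We should also note that $Z$ is a normed r.i. space, or at least state the result for $Z$ as a quasi-normed r.i. space, since the $r$-convexification with $r\le1$ need not be Banach. I would phrase the conclusion as an embedding of r.i. quasi-normed spaces.
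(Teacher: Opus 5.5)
Your proposal is correct and is essentially the paper's proof. Boundedness into $Z_r$ comes from the Fubini identity and H\"older's inequality. Optimality comes from testing $\psi^{r}g^{\ast\ast}$ against the unit ball of $X^{(1/r)}$ and using Fubini to move $\overline{T}_{\psi,r}$ onto $g^{\ast}$; the paper phrases this step as boundedness of the adjoint $\overline{T}_{\psi,r}^{\ast}$. One then takes associates and transfers the result to $\overline{Q}_{\psi,r}$ through the convexification identity.

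Your extra points address details the paper leaves implicit: checking that the formula for $\|\cdot\|_{(Z_r)^{\prime}}$ is an r.i.\ function norm, proving $\|\overline{Q}_{\psi,r}f\|_{Z}\le\|f\|_{X}$ explicitly, and noting that $Z=(Z_r)^{(r)}$ is in general only quasi-normed.
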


\begin{proof}
First, we prove (\ref{B1}). We may assume that $f\in X^{(1/r)}$ and
$f\ge0$. From the definition of the associate norm (\ref{HolRea}),
and since $\overline{T}_{\psi,r}f$ is decreasing, we have
\begin{align*}
\Vert\overline{T}_{\psi,r}f\Vert_{Z_{r}}  &  =\sup_{\Vert g\Vert
_{(Z_{r})^{\prime}}\leq1}\int_{0}^{1}g(t)\, \overline{T}_{\psi,r}f(t)\,dt\\
&  =\sup_{\Vert
g\Vert_{(Z_{r})^{\prime}}\leq1}\int_{0}^{1}g^{\ast}(t)\left(
\int_{t}^{1}\psi(s)^{r}f(s)\,\frac{ds}{s}\right)  dt\\
&  =\sup_{\Vert
g\Vert_{(Z_{r})^{\prime}}\leq1}\int_{0}^{1}\psi(s)^{r}f(s) \left(
\frac{1}{s}\int_{0}^{s}g^{\ast}(u)\,du\right)  ds \qquad\text{(by
Fubini's theorem)}\\
&  =\sup_{\Vert g\Vert_{(Z_{r})^{\prime}}\leq1}\int_{0}^{1}
f(s)\,\psi
(s)^{r}g^{\ast\ast}(s)\,ds\\
&  \leq\sup_{\Vert g\Vert_{(Z_{r})^{\prime}}\leq1}\Vert
f\Vert_{X^{(1/r)}}
\,\Vert\psi(\cdot)^{r}g^{\ast\ast}(\cdot)\Vert_{(X^{(1/r)})^{\prime}}
\qquad\text{(by (\ref{Holder}))}\\
&  =\Vert f\Vert_{X^{(1/r)}}\sup_{\Vert
g\Vert_{(Z_{r})^{\prime}}\leq1} \Vert
g\Vert_{(Z_{r})^{\prime}}\\
&  =\Vert f\Vert_{X^{(1/r)}}.
\end{align*}

\medskip

\noindent\emph{Optimality.} Assume that $\overline{T}_{\psi,r}:X^{(1/r)}%
\rightarrow Y^{(1/r)}$ is bounded for some r.i.\ space $Y$. By
duality, the
adjoint operator $\overline{T}_{\psi,r}^{\ast}:(Y^{(1/r)})^{\prime}%
\to(X^{(1/r)})^{\prime}$ is bounded and
\[
\Vert\overline{T}_{\psi,r}^{\ast}g\Vert_{(X^{(1/r)})^{\prime}}
\preceq\Vert g\Vert_{(Y^{(1/r)})^{\prime}} \qquad\text{for all
}g\in(Y^{(1/r)})^{\prime}.
\]
Let $g\in(Y^{(1/r)})^{\prime}$ and let $h\in X^{(1/r)}$ be
nonnegative. By Fubini's theorem,
\begin{align*}
\int_{0}^{1}g(t)\,\overline{T}_{\psi,r}h(t)\,dt  &
=\int_{0}^{1}g(t)\left(
\int_{t}^{1}\psi(s)^{r}h(s)\,\frac{ds}{s}\right)  dt\\
&  =\int_{0}^{1}\psi(s)^{r}h(s)\left(  \frac{1}{s}\int_{0}^{s}
g(u)\,du\right)  ds.
\end{align*}
Taking the supremum over all $h$ with $\|h\|_{X^{(1/r)}}\le1$, we
obtain
\[
\left\Vert \psi(s)^{r}\left(  \frac{1}{s}\int_{0}^{s}g(u)\,du\right)
\right\Vert _{(X^{(1/r)})^{\prime}} =\Vert\overline{T}_{\psi,r}^{\ast}%
g\Vert_{(X^{(1/r)})^{\prime}} \preceq\Vert
g\Vert_{(Y^{(1/r)})^{\prime}}.
\]
In particular, replacing $g$ by $g^{\ast}$ we get
\[
\Vert\psi(\cdot)^{r}g^{\ast\ast}(\cdot)\Vert_{(X^{(1/r)})^{\prime}}%
\preceq\Vert g^{\ast}\Vert_{(Y^{(1/r)})^{\prime}} =\Vert g\Vert_{(Y^{(1/r)}%
)^{\prime}},
\]
which by (\ref{Zrpsi}) means precisely that
\[
\Vert g\Vert_{(Z_{r})^{\prime}}\preceq\Vert
g\Vert_{(Y^{(1/r)})^{\prime}} \qquad\text{for all
}g\in(Y^{(1/r)})^{\prime},
\]
that is,
\[
(Y^{(1/r)})^{\prime}\hookrightarrow(Z_{r})^{\prime}.
\]
Taking associate spaces, we conclude that
\[
Z_{r}\hookrightarrow Y^{(1/r)}.
\]
Hence $Z_{r}$ is the optimal r.i.\ range for
$\overline{T}_{\psi,r}$.

Finally, by Theorem~\ref{inclu}, if
$\overline{Q}_{\psi,r}:X\rightarrow Y$ is bounded for an r.i.\ space
$Y$, then
\[
\overline{T}_{\psi,r}:X^{(1/r)}\rightarrow Y^{(1/r)}
\]
is bounded. Applying the previous conclusion, we obtain
\[
Z_{r}\hookrightarrow Y^{(1/r)}.
\]
Since convexification preserves continuous embeddings, it follows
that
\[
Z=(Z_{r})^{(r)}\hookrightarrow(Y^{(1/r)})^{(r)}=Y,
\]
which proves that $Z$ is the optimal r.i.\ range for
$\overline{Q}_{\psi,r}$.
\end{proof}

\begin{remark}
The description of the space $Z$ provides a clean and theoretically
optimal formulation. However, its explicit identification is usually
difficult in practice, since it requires understanding associate
norms of the form
\[
g\mapsto\left\Vert \psi(\cdot)^{r}g^{\ast\ast}(\cdot)\right\Vert
_{(X^{(1/r)})^{\prime}},
\]
as introduced in Proposition~\ref{CorOpti}.
\end{remark}

\begin{remark}
The identification of optimal r.i. ranges for the operators
$\overline {T}_{\psi,r}$ and $\overline{Q}_{\psi,r}$ given in
Proposition~\ref{CorOpti} is closely related to the general theory
of boundedness of classical linear operators on r.i.\ spaces; see,
in particular, \cite{EMMP} and the references therein.
\end{remark}


\section{Three embedding regimes}

\label{sec:main}

In this section we analyze the embeddings associated with the
oscillation inequality according to the interaction between the
geometry of the space $X$ and the growth of the function $\psi$.

This leads to three qualitatively different regimes. In the
supercritical case, the oscillation inequality yields an
$L^{\infty}$-type embedding. In the subcritical case, it is
equivalent to a maximal-type description. In the critical case,
logarithmic corrections appear and give rise to Hansson-type
targets.

The Boyd indices of $X$ and the growth indices of $\psi$ will be the
main parameters in this analysis.

\subsection{The supercritical regime}

\label{superc}

\begin{theorem}
\label{supercrit} Let $0<r\leq1$, let $X$ be an r.i.\ space and let
$\psi \in\mathcal{A}_{0}$. Then the following statements are
equivalent:

\begin{enumerate}
\item There exists a constant $C>0$ such that
\begin{equation}
\Vert f\Vert_{L^{\infty}}\leq C\left(  \left\Vert
\frac{O(|f|^{r},\cdot )^{1/r}}{\psi(\cdot)}\right\Vert _{X} +\Vert
f\Vert_{L^{r}}\right)
\label{intoLoo}%
\end{equation}
for every measurable $f$.

\item
\begin{equation}
\left\Vert \frac{\psi(s)^{r}}{s}\chi_{(0,1)}(s)\right\Vert _{(X^{(1/r)}%
)^{\prime}}<\infty. \label{kernel_global}%
\end{equation}

\end{enumerate}
\end{theorem}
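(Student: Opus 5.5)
By Theorem~\ref{inclu} with $Y=L^{\infty}$, statement (i) is equivalent to the boundedness of $\overline{T}_{\psi,r}:X^{(1/r)}\to (L^{\infty})^{(1/r)}$. We first check that $(L^{\infty})^{(1/r)}=L^{\infty}$ with $\|h\|_{(L^\infty)^{(1/r)}}=\|h\|_{L^\infty}^{r}$ (the convexification is a norm equivalence up to the power; boundedness of a positive linear operator into it is just boundedness into $L^\infty$). So we must show that
\[
\overline{T}_{\psi,r}:X^{(1/r)}\to L^{\infty}\ \text{is bounded}
\quad\Longleftrightarrow\quad
\left\Vert \frac{\psi(s)^{r}}{s}\chi_{(0,1)}(s)\right\Vert_{(X^{(1/r)})'}<\infty .
\]

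Since $\overline{T}_{\psi,r}h$ is decreasing in $t$ for $h\ge0$, and $|\overline T_{\psi,r}h|\le \overline T_{\psi,r}|h|$, we get
\[
\|\overline{T}_{\psi,r}h\|_{L^\infty}=\sup_{t}\overline{T}_{\psi,r}|h|(t)=\lim_{t\to0^+}\int_t^1\psi(s)^r|h(s)|\frac{ds}{s}=\int_0^1 |h(s)|\,\frac{\psi(s)^r}{s}\,ds,
\]
by monotone convergence. Thus the operator norm of $\overline T_{\psi,r}$ into $L^\infty$ equals
\[
\sup_{\|h\|_{X^{(1/r)}}\le1}\int_0^1|h(s)|\,\frac{\psi(s)^r}{s}\,ds .
\]
By the definition of the associate norm, this is exactly $\|\psi^r(s)s^{-1}\|_{(X^{(1/r)})'}$.

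There is one technical point: $X^{(1/r)}$ must be an r.i.\ (Banach function) space so that its associate space, and the Lorentz--Luxemburg-type identity, make sense. Since $1/r\ge1$, the background section guarantees this. The implication (ii)$\Rightarrow$ boundedness then follows directly from H\"older's inequality (\ref{Holder}). The converse follows because the supremum defining the associate norm is finite by the boundedness.

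The only other potential obstacle is the $L^r$ term in (\ref{bbb}), but Theorem~\ref{inclu} already handles it, so nothing further is needed.
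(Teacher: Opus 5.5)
Your proof is correct, but it is organized differently from the paper's. The paper does not invoke Theorem~\ref{inclu} here and proves both implications by hand:
\begin{itemize}
\item for (ii)$\Rightarrow$(i) it uses the identity $\|f\|_{L^\infty}^r=\int_0^1 O(|f|^r,s)\,\frac{ds}{s}+\|f\|_{L^r}^r$ together with H\"older's inequality;
\item for (i)$\Rightarrow$(ii) it argues by contradiction. It picks $h_n\ge0$ in the unit ball of $X^{(1/r)}$ with $\int_0^1 h_n\psi^r\,\frac{ds}{s}\to\infty$, tests (i) on $f_n=(\overline{T}_{\psi,r}h_n)^{1/r}$, computes $O(|f_n|^r,t)$ by Fubini, and controls it with Lemma~\ref{Mpus}.
\end{itemize}
These are exactly the steps of the proof of Theorem~\ref{inclu} specialized to $Y=L^\infty$. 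Your route instead applies Theorem~\ref{inclu} with $Y=L^\infty$, then observes that the norm of the positive kernel operator $\overline{T}_{\psi,r}:X^{(1/r)}\to L^\infty$ equals the associate norm of its kernel at $t=0^+$. This has the same mathematical content as the paper's proof. It makes the structure clearer and gives a direct quantitative bound on $\|\psi^r s^{-1}\chi_{(0,1)}\|_{(X^{(1/r)})'}$ instead of a contradiction argument. The paper's version, in turn, is self-contained and shows the extremal test functions explicitly.

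Two small slips need fixing:
\begin{itemize}
\item The convexification is isometric: $\|h\|_{(L^\infty)^{(1/r)}}=\||h|^{1/r}\|_{L^\infty}^{r}=\|h\|_{L^\infty}$, not $\|h\|_{L^\infty}^r$. The latter is not homogeneous of degree one, so with it the reduction to boundedness into $L^\infty$ would not read correctly.
\item The equality $\|\overline{T}_{\psi,r}h\|_{L^\infty}=\sup_t\overline{T}_{\psi,r}|h|(t)$ holds only for $h\ge0$. For signed $h$ it is only an inequality, which is still all you need, since $|h|$ lies in the same ball of $X^{(1/r)}$.
\end{itemize}
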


\begin{proof}
We first prove that (\ref{kernel_global}) implies (\ref{intoLoo}).
By (\ref{der2est}) and the Fundamental Theorem of Calculus,
\[
\|f\|_{L^{\infty}}^{r} =\lim_{t\to0^{+}} (|f|^{r})^{**}(t) =\int_{0}%
^{1}O(|f|^{r},s)\,\frac{ds}{s}+\left(  |f|^{r}\right)  ^{**}(1).
\]
Since
\[
\left(  |f|^{r}\right)  ^{**}(1)=\int_{0}^{1} (f^{*}(s))^{r}\,ds=\|f\|_{L^{r}%
}^{r},
\]
and $X^{(1/r)}$ is an r.i.\ space, H\"older's inequality yields
\begin{align*}
\|f\|_{L^{\infty}}^{r}  &
\le\int_{0}^{1}\frac{O(|f|^{r},s)}{\psi(s)^{r}}\,
\frac{\psi(s)^{r}}{s}\chi_{(0,1)}(s)\,ds+\|f\|_{L^{r}}^{r}\\[4pt]
&  \le\left\Vert \frac{O(|f|^{r},\cdot)}{\psi(\cdot)^{r}}\right\Vert
_{X^{(1/r)}} \left\Vert
\frac{\psi(s)^{r}}{s}\chi_{(0,1)}(s)\right\Vert
_{(X^{(1/r)})^{\prime}} +\|f\|_{L^{r}}^{r}\\
&  = \left\Vert
\frac{O(|f|^{r},\cdot)^{1/r}}{\psi(\cdot)}\right\Vert _{X}^{r}
\left\Vert \frac{\psi(s)^{r}}{s}\chi_{(0,1)}(s)\right\Vert _{(X^{(1/r)}%
)^{\prime}} +\|f\|_{L^{r}}^{r}.
\end{align*}
Hence (\ref{intoLoo}) follows.

We now prove the converse implication. Suppose that
(\ref{kernel_global}) fails, that is,
\[
\left\|  \frac{\psi(s)^{r}}{s}\chi_{(0,1)}(s) \right\|
_{(X^{(1/r)})^{\prime }} =\infty.
\]
Then, by the definition of the associate norm, there exists a
sequence $h_{n}\geq0$ with
\[
\Vert h_{n}\Vert_{X^{(1/r)}}\leq1
\]
such that
\[
\int_{0}^{1}h_{n}(s)\frac{\psi(s)^{r}}{s}\,ds\to\infty.
\]

Define
\[
g_{n}(t)=\int_{t}^{1}h_{n}(s)\frac{\psi(s)^{r}}{s}\,ds
\]
and let
\[
f_{n}=g_{n}^{1/r}.
\]
Then
\[
\Vert f_{n}\Vert_{L^{\infty}}^{r} =g_{n}(0)
=\int_{0}^{1}h_{n}(s)\frac {\psi(s)^{r}}{s}\,ds\to\infty.
\]

On the other hand, by Fubini's theorem,
\[
O(|f_{n}|^{r},t)=\frac{1}{t}\int_{0}^{t}\psi(s)^{r}h_{n}(s)\,ds.
\]
Therefore
\begin{align*}
\left(  \frac{O(|f_{n}|^{r},t)}{\psi(t)^{r}}\right)  ^{1/r}  &
=\left( \frac{1}{t\,\psi(t)^{r}}\int_{0}^{t}\left(
\psi(s)h_{n}(s)^{1/r}\right)
^{r}\,ds\right)  ^{1/r}\\
&  =\frac{P_{r}\left(  \psi\,h_{n}^{1/r}\right)  (t)}{\psi(t)}.
\end{align*}
Thus, by Lemma~\ref{Mpus},
\begin{align*}
\left\Vert \left(
\frac{O(|f_{n}|^{r},\cdot)}{\psi(\cdot)^{r}}\right)
^{1/r}\right\Vert _{X}  &  =\left\Vert \frac{P_{r}\left(  \psi\,h_{n}%
^{1/r}\right)  (\cdot)}{\psi(\cdot)}\right\Vert _{X}\\
&  \preceq\Vert h_{n}^{1/r}\Vert_{X}=\Vert h_{n}\Vert_{X^{(1/r)}}^{1/r}%
\preceq1.
\end{align*}

Finally,
\begin{align*}
\Vert f_{n}\Vert_{L^{r}}^{r}  &  = \int_{0}^{1}\left(  \int_{t}^{1}%
h_{n}(s)\frac{\psi(s)^{r}}{s}\,ds\right)  dt\\
&  = \int_{0}^{1}h_{n}(s)\psi(s)^{r}\,ds\\
&  \le\Vert h_{n}\Vert_{X^{(1/r)}}
\Vert\psi(s)^{r}\chi_{(0,1)}(s)\Vert
_{(X^{(1/r)})^{\prime}}\\
&  \le\Vert h_{n}\Vert_{X^{(1/r)}}\psi(1)^{r}\,\Vert\chi_{(0,1)}%
\Vert_{(X^{(1/r)})^{\prime}}\\
&  \preceq1.
\end{align*}

Thus the right-hand side of (\ref{intoLoo}) remains bounded, while
$\Vert f_{n}\Vert_{L^{\infty}}\to\infty$, which contradicts
(\ref{intoLoo}). Hence (\ref{kernel_global}) must hold.
\end{proof}

The following result provides a sufficient condition for the
embedding into $L^{\infty}$ in terms of the Boyd and growth indices.

\begin{proposition}
Let $0<r\leq1$, let $\psi\in\mathcal{A}_{0}$, and let $X$ be an
r.i.\ space. If
\[
\overline{\alpha}_{X}<\underline{\beta}_{\psi},
\]
then
\begin{equation}
\Vert f\Vert_{L^{\infty}}\preceq\left\Vert \frac{(O(|f|^{r},\cdot))^{1/r}%
}{\psi(\cdot)}\right\Vert _{X}+\Vert f\Vert_{L^{r}}. \label{dos}%
\end{equation}

\end{proposition}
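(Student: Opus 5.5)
By Theorem~\ref{supercrit}, it is enough to verify condition (\ref{kernel_global}), namely that $\frac{\psi(s)^{r}}{s}\chi_{(0,1)}(s)$ belongs to $(X^{(1/r)})^{\prime}$. Write $W=X^{(1/r)}$. By (\ref{eq:boyd_dual_convexification}) we have $\overline{\alpha}_{W}=r\overline{\alpha}_X$, and hence $\underline{\alpha}_{W'}=1-r\overline{\alpha}_X$.

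The plan is to bound the associate norm by duality,
\[
\Bigl\|\frac{\psi^{r}(s)}{s}\Bigr\|_{W'}=\sup_{\|h\|_W\le1}\int_0^1 h^*(s)\frac{\psi(s)^r}{s}\,ds .
\]
This uses (\ref{HolRea}), which is legitimate provided $\psi^r(s)/s$ is almost decreasing. I will check that last point at the end, and otherwise pass to its least decreasing majorant, which is equivalent to it when the indices of $\psi^r$ are below $1$.

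Next I decompose dyadically. For $I_k=(2^{-k-1},2^{-k})$, the contribution of $I_k$ is at most
\[
\psi(2^{-k})^r\,2^{k+1}\int_{I_k}h^*\le \psi(2^{-k})^r\,2^{k+1}\int_0^{2^{-k}}h^* .
\]
By the definition of $\underline{\beta}_\psi$ via (\ref{fundind}), for every $\varepsilon>0$ we have $\psi(2^{-k})\preceq 2^{-k(\underline\beta_\psi-\varepsilon)}\psi(1)$, since $m_\psi(t)\le C_\varepsilon t^{\underline\beta_\psi-\varepsilon}$ for $t<1$. For the other factor I use the fundamental function: $\int_0^{t}h^*\le \|h\|_W\varphi_{W'}(t)=t/\varphi_W(t)$, and $\varphi_W=\varphi_X^{r}$. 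Since $\overline{\beta}_{\varphi_X}\le\overline\alpha_X$, we get $\varphi_X(t)\succeq t^{\overline\alpha_X+\varepsilon}$ for $t<1$, using $\varphi_X(t)\ge \varphi_X(1)/m_{\varphi_X}(1/t)$ together with the upper index at infinity.

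Combining these, the $k$-th term is
\[
\preceq 2^{-kr(\underline\beta_\psi-\varepsilon)}\,2^{kr(\overline\alpha_X+\varepsilon)}
=2^{-kr(\underline\beta_\psi-\overline\alpha_X-2\varepsilon)}.
\]
Choosing $2\varepsilon<\underline\beta_\psi-\overline\alpha_X$ makes the series summable, uniformly over $\|h\|_W\le 1$. This gives (\ref{kernel_global}), and (\ref{dos}) follows from Theorem~\ref{supercrit}.

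The main obstacle is the index bookkeeping, in particular getting the correct inequalities relating $m_\psi$ and $m_{\varphi_X}$ to power bounds; the submultiplicativity limits in (\ref{fundind}) are what supply these. The dyadic argument also bypasses any need for monotonicity of $\psi^r(s)/s$: each block is bounded using $\psi(s)\le\psi(2^{-k})$ on $I_k$, which holds because $\psi$ is increasing, together with $h$ replaced by $h^*$ (allowed since the kernel bound is taken blockwise against a rearranged function; if needed, I would majorize $\psi^r(s)/s$ blockwise by the decreasing step function $\sum_k\psi(2^{-k})^r2^{k+1}\chi_{I_k}$, which is almost decreasing because $\underline\beta_\psi<1$ gives $\psi(2^{-k})^r2^{k}$ increasing up to constants in $k$). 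Note that only the Zippin index $\overline\beta_{\varphi_X}\le\overline\alpha_X$ is actually used, so the hypothesis is stronger than needed.
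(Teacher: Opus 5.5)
Your proof is correct. It uses the same dyadic decomposition over $I_k=(2^{-k-1},2^{-k})$ as the paper, but the key block estimate is different, and yours is better.

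\textbf{How the block estimates differ.} The paper writes $\chi_{I_k}=E_{2^{-k}}\chi_{[1/2,1)}$. It then bounds $\|\chi_{I_k}\|_{(X^{(1/r)})'}$ by the dilation function, using the decay $h_{(X^{(1/r)})'}(2^{-k})\preceq 2^{-k\delta}$ for some $\delta<\underline{\alpha}_{(X^{(1/r)})'}=1-r\overline{\alpha}_X$. This is where the Boyd index enters. You instead evaluate each block through the fundamental function:
\[
\int_{I_k}|h|\le\int_0^{2^{-k}}h^*\le\|h\|_W\,\varphi_{W'}(2^{-k})=\|h\|_W\,2^{-k}\varphi_X(2^{-k})^{-r}, \qquad W=X^{(1/r)}.
\]
This reduces (\ref{kernel_global}) to the summability of $\sum_k\bigl(\psi(2^{-k})/\varphi_X(2^{-k})\bigr)^r$. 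The paper itself uses this computation later, in the proof of Theorem~\ref{thm:critical_general}.

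\textbf{What your route gains.} It only needs $\overline{\beta}_{\varphi_X}<\underline{\beta}_\psi$. This follows from the stated hypothesis because $\overline{\beta}_{\varphi_X}\le\overline{\alpha}_X$. It covers strictly more cases whenever the upper Zippin and Boyd indices of $X$ differ; for example, take $\psi(t)=t^\gamma$ with $\overline{\beta}_{\varphi_X}<\gamma\le\overline{\alpha}_X$. As a consequence, your opening computation $\underline{\alpha}_{W'}=1-r\overline{\alpha}_X$ is never used.

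\textbf{Two small remarks.}
\begin{enumerate}
\item The discussion of whether $\psi(s)^r/s$ is monotone is unnecessary. Start from the definition $\|g\|_{W'}=\sup_{\|h\|_W\le1}\int|hg|$, bound $g$ on $I_k$ by $2^{k+1}\psi(2^{-k})^r$, and apply the Hardy--Littlewood inequality $\int_{I_k}|h|\le\int_0^{|I_k|}h^*$. That handles every block for arbitrary $h$, with no need to replace $h$ by $h^*$ against $g$. The triangle inequality in $W'$, as in the paper, works equally well.
\item In your fallback step, the almost monotonicity of $2^k\psi(2^{-k})^r$ in $k$ comes from $r\overline{\beta}_\psi<1$, not from $\underline{\beta}_\psi<1$. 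The ratio $\psi(2^{-j})/\psi(2^{-k})$ for $j<k$ is controlled by $m_\psi$ at arguments larger than $1$, that is, by the upper index.
\end{enumerate}
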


\begin{proof}
By Theorem~\ref{supercrit}, it suffices to prove that
\begin{equation}
\left\Vert \frac{\psi(s)^{r}}{s}\chi_{(0,1)}(s)\right\Vert _{(X^{(1/r)}%
)^{\prime}}<\infty. \label{weight_general}%
\end{equation}

For $k\geq0$ let $I_{k}=[2^{-k-1},2^{-k})$. Since $\chi_{(0,1)}=\sum
_{k=0}^{\infty}\chi_{I_{k}}$, we have
\[
\left\Vert \frac{\psi(s)^{r}}{s}\chi_{(0,1)}(s)\right\Vert _{(X^{(1/r)}%
)^{\prime}} \leq\sum_{k=0}^{\infty}\left\Vert
\frac{\psi(s)^{r}}{s}\chi _{I_{k}}(s)\right\Vert
_{(X^{(1/r)})^{\prime}}.
\]

For $s\in I_{k}$ we have $2^{-k-1}\leq s\leq2^{-k}$ and thus
\[
\frac{\psi(s)^{r}}{s}\leq2^{k+1}\,\psi(2^{-k})^{r},
\]
so
\[
\left\Vert \frac{\psi(s)^{r}}{s}\chi_{I_{k}}\right\Vert
_{(X^{(1/r)})^{\prime
}} \leq2^{k+1}\,\psi(2^{-k})^{r}\Vert\chi_{I_{k}}\Vert_{(X^{(1/r)})^{\prime}%
}.
\]
Moreover, $\chi_{I_{k}}=E_{2^{-k}}\chi_{[1/2,1)}$, hence
\[
\Vert\chi_{I_{k}}\Vert_{(X^{(1/r)})^{\prime}} \leq h_{(X^{(1/r)})^{\prime}%
}(2^{-k})\,\Vert\chi_{[1/2,1)}\Vert_{(X^{(1/r)})^{\prime}}.
\]

Since $\overline{\alpha}_{X}<\underline{\beta}_{\psi}$, it follows
from (\ref{eq:boyd_dual_convexification}) that
\[
\underline{\alpha}_{(X^{(1/r)})^{\prime}} =
1-\overline{\alpha}_{X^{(1/r)}} = 1-r\,\overline{\alpha}_{X}
>
1-r\,\underline{\beta}_{\psi}.
\]
Choose $\delta$ such that
\begin{equation}
1-r\,\underline{\beta}_{\psi} < \delta <
\underline{\alpha}_{(X^{(1/r)})^{\prime}}. \label{betar}
\end{equation}
Then, by (\ref{eq:boyd_lower_estimate_h}), there exists $c>0$ such
that
\begin{equation}
h_{(X^{(1/r)})^{\prime}}(2^{-k}) \leq c\,2^{-k\delta}, \qquad
k\geq0. \label{hdecay}
\end{equation}

On the other hand, since $\psi\in\mathcal{A}_{0}$, it follows from
the definition of the fundamental indices \((\ref{fundind})\) that,
for every $0<\varepsilon<\underline{\beta}_{\psi}$, there exists
$C_{\varepsilon}>0$ such that
\[
\psi(2^{-k}) \leq
C_{\varepsilon}\,2^{-k(\underline{\beta}_{\psi}-\varepsilon)},
\qquad k\geq0.
\]

Combining this with (\ref{hdecay}), we get
\[
\left\Vert \frac{\psi(s)^{r}}{s}\chi_{(0,1)}(s) \right\Vert
_{(X^{(1/r)})^{\prime}} \preceq \sum_{k=0}^{\infty}
2^{k}\psi(2^{-k})^{r}2^{-k\delta} \leq \sum_{k=0}^{\infty}
2^{k(1-r\underline{\beta}_{\psi}+r\varepsilon-\delta)}.
\]

By (\ref{betar}) we can choose $\varepsilon>0$ so that
\[
1-r\underline{\beta}_{\psi}+r\varepsilon-\delta<0,
\]
and therefore the series converges. Hence (\ref{weight_general})
holds, and (\ref{dos}) follows from
Theorem~\ref{supercrit}.\end{proof}

\subsection{The subcritical regime}

\label{sec:subcritical}

\begin{theorem}
Let $0<r\leq1$, let $\psi\in\mathcal{A}_{0}$, and let $X$ be an
r.i.\ space. Assume that
\[
\underline{\alpha}_{X}>\overline{\beta}_{\psi}.
\]
Then
\[
\left\Vert \frac{O(|f|^{r},\cdot)^{1/r}}{\psi(\cdot)}\right\Vert
_{X}+\Vert
f\Vert_{L^{r}}\simeq\left\Vert \frac{((|f|^{r})^{\ast\ast}(\cdot))^{1/r}}%
{\psi(\cdot)}\right\Vert _{X}\simeq\left\Vert \frac{f^{\ast\ast}(\cdot)}%
{\psi(\cdot)}\right\Vert _{X}.
\]

In particular, the resulting space is independent of $r$.
\end{theorem}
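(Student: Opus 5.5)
We will prove a chain of three inequalities. Write $A(f)$ for the left-hand side, $B(f)=\|((|f|^r)^{**})^{1/r}/\psi\|_X$ and $C(f)=\|f^{**}/\psi\|_X$.

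The easy directions come first. Since $O(|f|^r,t)\le (|f|^r)^{**}(t)$, we get $\|O(|f|^r,\cdot)^{1/r}/\psi\|_X\le B(f)$. Next, $\psi$ is increasing, so $\psi\le\psi(1)$, and $(|f|^r)^{**}(t)\ge (|f|^r)^{**}(1)=\|f\|_{L^r}^r$. Hence $B(f)\ge \|f\|_{L^r}\,\|\chi_{(0,1)}\|_X/\psi(1)$, and therefore $A(f)\preceq B(f)$. For $B\simeq C$, note that $((|f|^r)^{**})^{1/r}=P_r(f^*)$, and Jensen's inequality ($r\le1$) gives $P_r(f^*)\le P_1(f^*)=f^{**}$, so $B\le C$. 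Conversely, $f^{**}=P_1(f^*)$, and $P_1 g=P_r\bigl((g^r)^{1/r}\bigr)$ is not directly comparable. Instead we use that $f^*\le P_r(f^*)$ pointwise, since $f^*$ is decreasing. Then
\[
f^{**}=P_1(f^*)\le P_1\bigl(P_r(f^*)\bigr),
\]
and Lemma~\ref{Mpus} with $r=1$ (valid since $\psi\in\mathcal A_0$) gives $\|P_1(P_rf^*)/\psi\|_X\preceq \|P_r(f^*)/\psi\|_X=B(f)$. So $C\simeq B$, and $B$, and hence the space, is independent of $r$.

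The main step is $B(f)\preceq A(f)$, and this is where $\underline\alpha_X>\overline\beta_\psi$ enters. As in the proof of Theorem~\ref{inclu}, the fundamental theorem of calculus gives
\[
((|f|^r)^{**}(t))^{1/r}\preceq \overline Q_{\psi,r}\Bigl(\tfrac{O(|f|^r,\cdot)^{1/r}}{\psi}\Bigr)(t)+\|f\|_{L^r}.
\]
Divide by $\psi(t)$. The constant term contributes $\|f\|_{L^r}\|1/\psi\|_X$. Since $\overline\beta_\psi<\underline\alpha_X\le\underline\beta_{\varphi_X}$, we have $\psi(t)\succeq t^{\overline\beta_\psi+\varepsilon}$, and $1/\psi\in X$ follows by the same dyadic decomposition used in the supercritical proposition: $\|\chi_{I_k}\|_X\le h_X(2^{-k})\preceq 2^{-k(\underline\alpha_X-\varepsilon)}$ is summable against $\psi(2^{-k})^{-1}$. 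It remains to show that the weighted Hardy operator
\[
g\mapsto \frac{1}{\psi(t)}\Bigl(\int_t^1(\psi(s)g(s))^r\frac{ds}{s}\Bigr)^{1/r}
\]
is bounded on $X$. Pass to the $1/r$-convexification: with $h=g^r$, this becomes boundedness on $X^{(1/r)}$ of
\[
Sh(t)=\int_t^1\Bigl(\frac{\psi(s)}{\psi(t)}\Bigr)^r h(s)\frac{ds}{s}.
\]
Substituting $s=t/u$ gives $Sh(t)=\int_t^1 (\psi(t/u)/\psi(t))^r h(t/u)\,du/u$, where $\psi(t/u)/\psi(t)\le m_\psi(1/u)\preceq u^{-\overline\beta_\psi-\varepsilon}$. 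Also $h(t/u)\chi_{(0,u)}(t)=E_u h(t)$ with $\|E_u\|_{X^{(1/r)}}\preceq u^{r\underline\alpha_X-r\varepsilon}$ by \eqref{eq:boyd_dual_convexification}. By Minkowski's inequality,
\[
\|Sh\|_{X^{(1/r)}}\preceq\|h\|_{X^{(1/r)}}\int_0^1 u^{\,r(\underline\alpha_X-\overline\beta_\psi-2\varepsilon)}\frac{du}{u}<\infty.
\]

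The expected obstacle is exactly this weighted Hardy estimate: justifying the Minkowski/dilation argument and the ranges of the indices after convexification. Everything else is pointwise or follows from Lemma~\ref{Mpus}.
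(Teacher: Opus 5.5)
Your proof is correct, but it reaches the result partly by a different route. Its core step, the bound for $\overline Q_{\psi,r}g/\psi$ on $X$ via Minkowski's inequality in $X^{(1/r)}$ with $m_\psi$ against $h_{X^{(1/r)}}$, is the paper's computation (with $s=t/u$ in place of $s=ut$). The differences are in how that step is embedded and in the second equivalence.

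For $B\preceq A$, the paper introduces an auxiliary space $Y$ normed by $B$ and invokes Theorem~\ref{inclu}. It must therefore prove $\overline Q_{\psi,r}:X\to Y$, first stripping an extra outer $P_r$ with Lemma~\ref{Mpus}. You instead apply the fundamental-theorem-of-calculus identity pointwise and divide by $\psi$, so no auxiliary space is needed. This also makes explicit something the paper leaves implicit. For $r<1$ the functional $B$ is a priori only a quasi-norm, and for $Y$ to be an r.i.\ space (and for the $\|f\|_{L^r}$ term to be absorbed) one needs $\chi_{(0,1)}\in Y$, i.e.\ $1/\psi\in X$. You verify this by the dyadic argument, and it is genuinely needed: if $1/\psi\notin X$, constants would have $A<\infty=C$.

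For $B\simeq C$, the paper cites \cite[Theorem 4.5]{Tur}. Your argument is self-contained: Jensen gives $B\le C$, and $f^*\le P_r f^*$ together with Lemma~\ref{Mpus} at exponent $1$ gives $C\preceq B$. It shows that this equivalence holds for every r.i.\ $X$ and every $\psi\in\mathcal A_0$. So the hypothesis $\underline\alpha_X>\overline\beta_\psi$ enters only through the Hardy estimate and through $1/\psi\in X$. Your directions are also the right ones: Jensen gives $B\le C$ pointwise, whereas the paper's displayed chain lists the two inequalities in the reverse order.

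What the paper's route buys in exchange is uniformity: it keeps the subcritical case inside the same Hardy-operator framework (Theorem~\ref{inclu}) that organizes the supercritical and critical regimes.

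One minor point of phrasing: the lower bound $\psi(t)\succeq t^{\overline\beta_\psi+\varepsilon}$ follows from the definition of $\overline\beta_\psi$ alone. The index hypothesis is what then places $t^{-\overline\beta_\psi-\varepsilon}$ in $X$ for small $\varepsilon$.
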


\begin{proof}
Let $Y$ be the r.i.\ space defined by
\[
\Vert f\Vert_{Y}:=\left\Vert \frac{\left(
(|f|^{r})^{\ast\ast}(t)\right) ^{1/r}}{\psi(t)}\right\Vert _{X}.
\]
Clearly,
\[
\left\Vert \frac{O(|f|^{r},\cdot)^{1/r}}{\psi(\cdot)}\right\Vert _{X}%
\leq\left\Vert \frac{\left(  (|f|^{r})^{\ast\ast}(\cdot)\right)  ^{1/r}}%
{\psi(\cdot)}\right\Vert _{X}.
\]
Also, since $\psi$ is increasing on $(0,1)$, for every $0<t<1$ we
have
\[
\frac{\left(  (|f|^{r})^{\ast\ast}(t)\right)
^{1/r}}{\psi(t)}\geq\frac{\Vert f\Vert_{L^{r}}}{\psi(1)},
\]
and therefore
\[
\Vert f\Vert_{L^{r}}\preceq\left\Vert \frac{\left(  (|f|^{r})^{\ast\ast}%
(\cdot)\right)  ^{1/r}}{\psi(\cdot)}\right\Vert _{X}.
\]
Thus
\begin{equation}
\left\Vert \frac{O(|f|^{r},\cdot)^{1/r}}{\psi(\cdot)}\right\Vert
_{X}+\Vert
f\Vert_{L^{r}}\preceq\left\Vert \frac{\left(  (|f|^{r})^{\ast\ast}%
(\cdot)\right)  ^{1/r}}{\psi(\cdot)}\right\Vert _{X}. \label{localsub1}%
\end{equation}

To prove the converse inequality, by Theorem~\ref{inclu} it suffices
to show that $\overline{Q}_{\psi,r}$ is bounded from $X$ to $Y$. We
have
\begin{align*}
\Vert\overline{Q}_{\psi,r}f\Vert_{Y}  &  = \left\Vert
\frac{1}{\psi(t)} \left[  \left(
(\overline{Q}_{\psi,r}f)^{r}\right)  ^{\ast\ast}(t)\right]
^{1/r}\right\Vert _{X}\\
&  = \left\Vert
\frac{1}{\psi(t)}\,P_{r}(\overline{Q}_{\psi,r}f)(t)\right\Vert _{X}
\preceq\left\Vert
\frac{\overline{Q}_{\psi,r}f(t)}{\psi(t)}\right\Vert
_{X} \qquad\text{(by Lemma~\ref{Mpus}).}%
\end{align*}

Since
\begin{align*}
\left(  \frac{\overline{Q}_{\psi,r}f(t)}{\psi(t)}\right)  ^{r}  &  = \int%
_{t}^{1}\frac{\psi(s)^{r}}{\psi(t)^{r}}\,|f(s)|^{r}\,\frac{ds}{s}\\
&  = \int_{1}^{1/t}\frac{\psi(ut)^{r}}{\psi(t)^{r}}\,|f(ut)|^{r}\,\frac{du}%
{u}\\
&  \leq\int_{1}^{\infty}m_{\psi^{r}}(u)\,|f(ut)|^{r}\chi_{(0,1/u)}%
(t)\,\frac{du}{u},
\end{align*}
and $0<r\leq1$, the space $X^{(1/r)}$ is a Banach r.i.\ space, so
Minkowski's integral inequality yields
\begin{align*}
\left\Vert \left(  \frac{\overline{Q}_{\psi,r}f}{\psi}\right)
^{r}\right\Vert
_{X^{(1/r)}}  &  \leq\int_{1}^{\infty}m_{\psi^{r}}(u)\, \Vert\,|f|^{r}%
(u\cdot)\chi_{(0,1/u)}(\cdot)\Vert_{X^{(1/r)}}\,\frac{du}{u}\\
&  = \int_{1}^{\infty}m_{\psi^{r}}(u)\, \Vert E_{1/u}(|f|^{r})\Vert
_{X^{(1/r)}}\,\frac{du}{u}\\
&  \leq\left(
\int_{1}^{\infty}m_{\psi^{r}}(u)\,h_{X^{(1/r)}}(1/u)\,\frac
{du}{u}\right)  \Vert\,|f|^{r}\Vert_{X^{(1/r)}}.
\end{align*}

By (\ref{eq:boyd_lower_estimate_h}), applied to \(X^{(1/r)}\), for
every \(\varepsilon>0\) we have
\[
h_{X^{(1/r)}}(1/u) \preceq
u^{-\underline{\alpha}_{X^{(1/r)}}+\varepsilon} =
u^{-r\underline{\alpha}_{X}+\varepsilon}, \qquad u>1,
\]
where we used \((\ref{eq:boyd_dual_convexification})\). Moreover,
since $\psi\in\mathcal{A}_{0}$,
\[
m_{\psi^{r}}(u)\preceq u^{r\overline{\beta}_{\psi}+\varepsilon}.
\]
Choosing $0<\varepsilon<\frac r2(\underline{\alpha}_{X}-\overline{\beta}%
_{\psi})$, we get
\[
\int_{1}^{\infty}m_{\psi^{r}}(u)\,h_{X^{(1/r)}}(1/u)\,\frac{du}{u}<\infty.
\]
Hence
\[
\left\Vert \frac{\overline{Q}_{\psi,r}f}{\psi}\right\Vert _{X}^{r} =
\left\Vert \left(  \frac{\overline{Q}_{\psi,r}f}{\psi}\right)
^{r}\right\Vert _{X^{(1/r)}} \preceq\Vert\,|f|^{r}\Vert_{X^{(1/r)}}
= \Vert f\Vert_{X}^{r},
\]
which proves the boundedness of $\overline{Q}_{\psi,r}:X\rightarrow
Y$.

Therefore, by Theorem~\ref{inclu},
\[
\left\Vert \frac{\left(  (|f|^{r})^{\ast\ast}(\cdot)\right)  ^{1/r}}%
{\psi(\cdot)} \right\Vert _{X} \preceq\left\Vert \frac{O(|f|^{r},\cdot)^{1/r}%
}{\psi(\cdot)}\right\Vert _{X} +\|f\|_{L^{r}}.
\]
Combining the previous estimate with (\ref{localsub1}), we obtain
the first equivalence.

Finally, the second equivalence follows from \cite[Theorem
4.5]{Tur}, which for $0<r<1$ yields
\[
\left\Vert \frac{f^{\ast\ast}(t)}{\psi(t)}\right\Vert _{X}
\leq\left\Vert
\frac{((|f|^{r})^{\ast\ast}(t))^{1/r}}{\psi(t)}\right\Vert _{X}
\preceq \left\Vert \frac{f^{\ast\ast}(t)}{\psi(t)}\right\Vert _{X}.
\]
This completes the proof.
\end{proof}

\begin{remark}
An important feature of the subcritical regime is that, up to the
natural $L^{r}$ term, the oscillation space is independent of the
exponent $r$. More precisely,
\[
\left\Vert \frac{O(|f|^{r},\cdot)^{1/r}}{\psi(\cdot)}\right\Vert
_{X} +\|f\|_{L^{r}}
\]
is equivalent to
\[
\left\Vert \frac{f^{**}(\cdot)}{\psi(\cdot)}\right\Vert _{X}.
\]
Thus, in this regime, the oscillation functional is equivalent to a
maximal-type quantity.
\end{remark}

\


\subsection{The critical regime}

\label{sec:critical}

Throughout this subsection we assume that
\[
\left\Vert \frac{\psi(s)^{r}}{s}\chi_{(0,1)}(s)\right\Vert _{(X^{(1/r)}%
)^{\prime}} =\infty.
\]
By Theorem~\ref{supercrit}, this excludes the cases in which the
oscillation inequality already yields an embedding into
$L^{\infty}$.

In the critical situation the quotient $\psi/\varphi_{X}$ no longer
yields a purely power-type description. We therefore introduce the
function
\begin{equation}
M(t):=\sup_{t<s<1}\frac{\psi(s)}{\varphi_{X}(s)},\qquad0<t<1, \label{Mt}%
\end{equation}
which measures the maximal size of the quotient on intervals of the
form $(t,1)$ and will be referred to as the deviation function.

We now pass to the operator-theoretic formulation of the critical
case. As in the previous sections, the key point is the boundedness
of the Hardy-type operators $\overline{Q}_{\psi,r}$ and
$\overline{T}_{\psi,r}$. Recall that, for $1\leq\alpha<\infty$, a
couple $(X,Y)$ of r.i.\ spaces is called an $\alpha$-Berezhnoi pair
if $X$ satisfies an $\alpha$-lower estimate and $Y$ satisfies an
$\alpha$-upper estimate. The following criterion, which is the form
of Berezhnoi's theorem needed in this paper, is proved in
Appendix~\ref{app:berezhnoi_criterion}.

\begin{theorem}
\label{thm:Qpsi_rconc} Let $1\le\alpha<\infty$, and let $(X,Y)$ be
an $\alpha$-Berezhnoi pair of r.i.\ spaces. Then
$\overline{Q}_{\psi,r}:X\to Y$ is bounded if and only if
\begin{equation}
\sup_{0<x<1}\varphi_{Y^{(1/r)}}(x)\, \left\Vert \frac{\psi(s)^{r}}{s}%
\chi_{(x,1]}(s)\right\Vert _{(X^{(1/r)})^{\prime}} <\infty. \label{BerezhC}%
\end{equation}
\end{theorem}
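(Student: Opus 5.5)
The plan is to reduce everything to the linear operator $\overline{T}_{\psi,r}$ acting between convexifications, and then apply a Berezhnoi-type argument based on dyadic blocking. By Theorem~\ref{inclu} (ii)$\Leftrightarrow$(iii), boundedness of $\overline{Q}_{\psi,r}:X\to Y$ is equivalent to boundedness of
\[
\overline{T}_{\psi,r}:X^{(1/r)}\to Y^{(1/r)}.
\]
Set $\widetilde X=X^{(1/r)}$ and $\widetilde Y=Y^{(1/r)}$. By Remark~\ref{rem:convexification_estimates}, $\widetilde X$ satisfies an $\alpha/r$-lower estimate and $\widetilde Y$ satisfies an $\alpha/r$-upper estimate. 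So $(\widetilde X,\widetilde Y)$ is an $\alpha/r$-Berezhnoi pair, and $\alpha/r\ge1$. It therefore suffices to prove the criterion for the positive kernel operator $Tf(t)=\int_t^1 k(s)f(s)\,ds$ with $k(s)=\psi(s)^r/s$. The claim to prove is that $T$ is bounded from $\widetilde X$ to $\widetilde Y$ iff
\[
\sup_x \varphi_{\widetilde Y}(x)\,\|k\chi_{(x,1]}\|_{\widetilde X'}<\infty.
\]

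\emph{Necessity.} This is a testing argument. For fixed $x$ and $f\ge0$ with $\|f\|_{\widetilde X}\le1$, the function $Tf$ is decreasing. Hence for $t<x$,
\[
Tf(t)\ge \int_x^1 k f .
\]
This gives
\[
\|Tf\|_{\widetilde Y}\ge \varphi_{\widetilde Y}(x)\int_x^1 kf .
\]
Taking the supremum over such $f$ yields $\varphi_{\widetilde Y}(x)\,\|k\chi_{(x,1]}\|_{\widetilde X'}\le\|T\|$. No geometry is needed for this direction.

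\emph{Sufficiency.} This is the core step. Let $F(x)=\|k\chi_{(x,1]}\|_{\widetilde X'}$, which is decreasing in $x$. For $f\ge0$ with $\|f\|_{\widetilde X}\le 1$, choose points $x_0=1>x_1>x_2>\dots$ adapted to $F$, for instance $x_j=\sup\{x: F(x)\ge 2^{j}F(1^-)\}$ (with the usual modification if $F$ jumps or stays bounded). On $(x_{j+1},x_j]$ decompose
\[
Tf(t)\le \sum_{i\le j}\int_{(x_{i+1},x_i]}kf .
\]
Set $a_i=\int_{(x_{i+1},x_i]} kf$. By H\"older's inequality,
\[
a_i\le \|f\chi_{(x_{i+1},x_i]}\|_{\widetilde X}\,\|k\chi_{(x_{i+1},x_i]}\|_{\widetilde X'}\le \|f\chi_{I_i}\|_{\widetilde X}\,F(x_{i+1}).
\]
The geometric growth of $F(x_j)\approx 2^j$ lets the sum over $i\le j$ be dominated by a geometric series. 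This controls $Tf$ on $I_j=(x_{j+1},x_j]$ by a weighted sum of the $b_i:=\|f\chi_{I_i}\|_{\widetilde X}$ for $i$ near $j$, say $Tf\chi_{I_j}\preceq 2^{j}\sum_{i\le j}2^{-(j-i)\cdot}\cdots$. Then use the $\alpha/r$-upper estimate of $\widetilde Y$ on the disjoint pieces $Tf\chi_{I_j}$, together with $\|\chi_{I_j}\|_{\widetilde Y}\le\varphi_{\widetilde Y}(x_j)\preceq 1/F(x_{j})$ from \eqref{BerezhC}. This gives
\[
\|Tf\|_{\widetilde Y}\preceq\Bigl(\sum_j \bigl(\textstyle\sum_{i\le j}2^{i-j}b_i\bigr)^{\alpha/r}\Bigr)^{r/\alpha}\preceq \Bigl(\sum_i b_i^{\alpha/r}\Bigr)^{r/\alpha}
\]
by Young's inequality for sequences. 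Finally, the $\alpha/r$-lower estimate of $\widetilde X$ gives $(\sum_i b_i^{\alpha/r})^{r/\alpha}\preceq\|f\|_{\widetilde X}\le1$.

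The main obstacle is the choice of the partition and the bookkeeping in the bound $F(x_{i+1})/F(x_j)$ for $i<j$. Since $F(x_{i+1})\approx 2^{i+1}$ and $1/F(x_j)\approx 2^{-j}$, one needs exactly the geometric decay $2^{i-j}$, up to a shift of one index. Handling jumps of $F$ requires a careful definition, e.g. using $F(x_j^+)$ versus $F(x_j)$, or absorbing a block endpoint term. If $F$ is bounded, only finitely many blocks occur and the argument is simpler. The degenerate endpoint cases must also be checked: $\varphi_{\widetilde Y}(0^+)=0$, and $k\notin\widetilde X'$ near $0$ in the critical regime, so that infinitely many blocks accumulate at $0$.
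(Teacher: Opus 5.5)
Your proposal is correct in substance, and its first step is exactly the paper's proof. Theorem~\ref{inclu} reduces boundedness of $\overline{Q}_{\psi,r}:X\to Y$ to boundedness of $\overline{T}_{\psi,r}:X^{(1/r)}\to Y^{(1/r)}$, and Remark~\ref{rem:convexification_estimates} makes $(X^{(1/r)},Y^{(1/r)})$ an $(\alpha/r)$-Berezhnoi pair. The paper stops there and quotes Berezhnoi's theorem.

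You instead prove the criterion yourself for the kernel $k(s)=\psi(s)^{r}/s$:
\begin{itemize}
\item necessity by testing, using $Tf\ge(\int_x^1 kf)\chi_{(0,x)}$;
\item sufficiency by blocking along the level sets of $F(x)=\|k\chi_{(x,1]}\|_{\widetilde X'}$, followed by Young's inequality for sequences and the upper and lower $\alpha/r$-estimates.
\end{itemize}
This has three advantages. The argument is self-contained. It shows that necessity holds for any pair of r.i.\ spaces, since the Berezhnoi hypothesis only enters in sufficiency. And it avoids relying on the general-kernel statement in the appendix, which as written is too broad. That statement omits the factor $1/s$ in the $X'$-norm, and for kernels that genuinely depend on $t$ a single supremum condition of that form is not sufficient. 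For the $t$-independent kernel needed here, your argument is the classical one. The paper's route is shorter but rests entirely on the citation.

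One detail must be repaired. Your normalization $x_j=\sup\{x:F(x)\ge 2^{j}F(1^-)\}$ degenerates in the typical case. Since $F(x)\preceq\varphi_{\widetilde X'}(1-x)$ for $x>1/2$, one has $F(1^-)=0$ whenever $\varphi_{\widetilde X'}(0^+)=0$ (for instance $X=L^2$). Then every $x_j$ equals $1$.

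Index the levels by $j\in\mathbb{Z}$ instead, say $x_j=\inf\{x\in(0,1):F(x)\le 2^{j}\}$, and let $B$ denote the supremum in \eqref{BerezhC}.
\begin{itemize}
\item By the Fatou property, $F$ is right-continuous. Hence $\|k\chi_{I_i}\|_{\widetilde X'}\le F(x_{i+1})\le 2^{i+1}$.
\item $F>2^{j}$ on $(0,x_j)$ and $\varphi_{\widetilde Y}$ is continuous. So \eqref{BerezhC} gives $\|\chi_{I_j}\|_{\widetilde Y}\le\varphi_{\widetilde Y}(x_j)\le B\,2^{-j}$ for every nonempty block $I_j=(x_{j+1},x_j]$.
\end{itemize}
This simultaneously handles jumps of $F$ and the case $F(0^+)<\infty$, where there are finitely many blocks and the last one is $(0,x_J]$. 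It yields $\|Tf\chi_{I_j}\|_{\widetilde Y}\le 2B\sum_{i\le j}2^{i-j}b_i$. The rest of your argument then goes through over $\mathbb{Z}$, using the Fatou property to apply the upper and lower estimates to countably many disjoint blocks.
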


We first derive the basic critical estimate in which the deviation
function $M$ and the logarithmic correction naturally appear.

\begin{theorem}
\label{thm:critical_general} Let $0<r\leq1$, let $X$ be an r.i.\
space satisfying an $\alpha$-lower estimate for some $\alpha>1$, and
let $\psi \in\mathcal{A}_{0}$. Set
\[
\beta^{\prime}:=\frac{\alpha}{\alpha-r}.
\]

\begin{enumerate}
\item There exists a constant $C>0$ such that for every measurable $f$ and
every $0<t<1$,
\begin{equation}
(|f|^{r})^{\ast\ast}(t)-(|f|^{r})^{\ast\ast}(1)\leq C\left(  \log\frac{e}%
{t}\right)  ^{1/\beta^{\prime}}\left\Vert \frac{O(|f|^{r},\cdot)^{1/r}}%
{\psi(\cdot)}\right\Vert _{X}^{r}M(t)^{r}. \label{eq:critical_general}%
\end{equation}

\item Let $Y$ be an r.i.\ space satisfying an $\alpha$-upper estimate. Assume
that
\[
\sup_{0<x<1}\varphi_{Y}(x)^{r}\left(  \log\frac{e}{x}\right)
^{1/\beta ^{\prime}}M(x)^{r}<\infty.
\]
Then there exists a constant $C>0$ such that for every measurable
$f$,
\[
\Vert f\Vert_{Y}\leq C\left(  \left\Vert \frac{O(|f|^{r},\cdot)^{1/r}}%
{\psi(\cdot)}\right\Vert _{X}+\Vert f\Vert_{L^{r}}\right)  .
\]

\end{enumerate}
\end{theorem}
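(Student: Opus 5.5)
For part (i) we start from the identity used in the proof of Theorem~\ref{inclu}:
\[
(|f|^{r})^{**}(t)-(|f|^{r})^{**}(1)=\int_t^1 O(|f|^r,s)\frac{ds}{s}
=\int_t^1 \frac{O(|f|^r,s)}{\psi(s)^r}\,\frac{\psi(s)^r}{s}\chi_{(t,1)}(s)\,ds .
\]
Hölder's inequality (\ref{Holder}) in $X^{(1/r)}$ then bounds this by
\[
\Big\|\frac{O(|f|^r,\cdot)^{1/r}}{\psi}\Big\|_X^r\,
\Big\|\frac{\psi(s)^r}{s}\chi_{(t,1)}(s)\Big\|_{(X^{(1/r)})'} .
\]
So part (i) reduces to the kernel estimate
\[
\Big\|\frac{\psi(s)^r}{s}\chi_{(t,1)}\Big\|_{(X^{(1/r)})'}\preceq (\log(e/t))^{1/\beta'}M(t)^r .
\]
To prove it, we use the structure of the spaces involved. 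By Remark~\ref{rem:convexification_estimates}, $X^{(1/r)}$ satisfies an $\alpha/r$-lower estimate with $\alpha/r>1$. By the standard duality between lower and upper estimates, $(X^{(1/r)})'$ then satisfies an upper $q$-estimate with $q=(\alpha/r)'=\alpha/(\alpha-r)=\beta'$.

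We decompose $(t,1)$ dyadically, $I_k=[2^{-k-1},2^{-k})$ for $0\le k\le N$ with $2^{-N-1}\simeq t$; the number of pieces is $N\simeq\log(e/t)$. On each $I_k$ we have $\psi(s)^r/s\le 2^{k+1}\psi(2^{-k})^r$. We also use (\ref{funddual}) for $X^{(1/r)}$ together with $\varphi_{X^{(1/r)}}=\varphi_X^{1/r}$:
\[
\|\chi_{I_k}\|_{(X^{(1/r)})'}\le\varphi_{(X^{(1/r)})'}(2^{-k})=2^{-k}\varphi_X(2^{-k})^{-r}.
\]
Hence each piece has norm $\preceq (\psi(2^{-k})/\varphi_X(2^{-k}))^r\le M(t)^r$, up to the doubling of $\psi$ and $\varphi_X$ (both in $\mathcal A$), which controls the endpoint where $2^{-k}\le t$. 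The pieces are disjointly supported, so the upper $\beta'$-estimate gives
\[
\Big\|\sum_{k\le N}g_k\Big\|\preceq\Big(\sum_{k\le N}\|g_k\|^{\beta'}\Big)^{1/\beta'}\preceq N^{1/\beta'}M(t)^r ,
\]
which is (\ref{eq:critical_general}).

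The main obstacle I expect is the duality step, namely that an $\alpha/r$-lower estimate for $X^{(1/r)}$ yields an upper $(\alpha/r)'$-estimate for the associate space with a uniform constant. This is classical for Banach lattices (Lindenstrauss--Tzafriri) and passes to the Köthe dual, but it needs a brief justification. If this is problematic, the fallback is to dualize directly: test against $h\ge0$ with $\|h\|_{X^{(1/r)}}\le1$ and apply the lower estimate to $h\chi_{I_k}$. Writing $a_k:=\|h\chi_{I_k}\|$, we get
\[
\int h\,\frac{\psi^r}{s}\chi_{(t,1)}\le\sum_k a_k\|g_k\|_{(X^{(1/r)})'}\le\Big(\sum_k a_k^{\alpha/r}\Big)^{r/\alpha}\Big(\sum_k\|g_k\|^{\beta'}\Big)^{1/\beta'},
\]
and the lower estimate gives $(\sum a_k^{\alpha/r})^{r/\alpha}\preceq 1$. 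This fallback avoids the duality theorem entirely.

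For part (ii), I would apply Theorem~\ref{thm:Qpsi_rconc} to the $\alpha$-Berezhnoi pair $(X,Y)$ and then conclude with Theorem~\ref{inclu} ((iii)$\Rightarrow$(i)). Condition (\ref{BerezhC}) reads
\[
\sup_x\varphi_{Y^{(1/r)}}(x)\Big\|\frac{\psi^r}{s}\chi_{(x,1]}\Big\|_{(X^{(1/r)})'}<\infty,
\]
where $\varphi_{Y^{(1/r)}}=\varphi_Y^r$. By the kernel estimate just proved, the norm factor is $\preceq(\log(e/x))^{1/\beta'}M(x)^r$, so (\ref{BerezhC}) follows from the stated hypothesis. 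Thus $\overline Q_{\psi,r}:X\to Y$ is bounded, and the desired embedding follows from Theorem~\ref{inclu}.
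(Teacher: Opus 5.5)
Your proposal is correct and follows the paper's proof essentially step by step. For (i) you use the same integrated identity, the same H\"older inequality in $(X^{(1/r)},(X^{(1/r)})')$, the same dyadic blocks with $\|w_j\|\preceq(\psi(2^{-j})/\varphi_X(2^{-j}))^r$, and the $\beta'$-upper estimate of $(X^{(1/r)})'$ to get the factor $(\log(e/t))^{1/\beta'}$. For (ii) you use the Berezhnoi criterion (Theorem~\ref{thm:Qpsi_rconc}) combined with Theorem~\ref{inclu}.

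Your fallback is a valid, self-contained proof of the duality step that the paper only asserts: you test against $h$ and apply the $\alpha/r$-lower estimate to the disjoint pieces $h\chi_{I_k}$.

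Two small slips need fixing. First, $\varphi_{X^{(1/r)}}=\varphi_X^{r}$, not $\varphi_X^{1/r}$; your formula $2^{-k}\varphi_X(2^{-k})^{-r}$ already uses the correct version. Second, there are $N+1$ pieces, so the factor should read $(N+1)^{1/\beta'}$.
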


\begin{proof}
\noindent\textbf{1)} By integrating (\ref{der2est}) over \((t,1)\),
we get
\[
\left(  |f|^{r}\right)  ^{\ast\ast}(t) =
\int_{t}^{1}O(|f|^{r},s)\,\frac {ds}{s} + \left(  |f|^{r}\right)
^{\ast\ast}(1), \qquad0<t<1.
\]
Hence
\[
\left(  |f|^{r}\right)  ^{\ast\ast}(t)-\left(  |f|^{r}\right)
^{\ast\ast}(1) = \int_{t}^{1}\frac{O(|f|^{r},s)}{\psi(s)^{r}}
\frac{\psi(s)^{r}}{s}\,ds.
\]

Since $X^{(1/r)}$ is an r.i.\ space, H\"older's inequality in the
pair $(X^{(1/r)},(X^{(1/r)})^{\prime})$ yields
\begin{align*}
\int_{t}^{1}\frac{O(|f|^{r},s)}{\psi(s)^{r}}
\frac{\psi(s)^{r}}{s}\,ds  & \le\left\Vert
\frac{O(|f|^{r},\cdot)}{\psi(\cdot)^{r}} \right\Vert _{X^{(1/r)}}
\left\Vert \frac{\psi(s)^{r}}{s}\chi_{(t,1]}(s) \right\Vert
_{(X^{(1/r)})^{\prime}}\\
&  = \left\Vert \frac{O(|f|^{r},\cdot)^{1/r}}{\psi(\cdot)}
\right\Vert _{X}^{r} \left\Vert \frac{\psi(s)^{r}}{s}\chi_{(t,1]}(s)
\right\Vert _{(X^{(1/r)})^{\prime}} .
\end{align*}

We now estimate the kernel
\[
\left\Vert \frac{\psi(s)^{r}}{s}\chi_{(t,1]}(s) \right\Vert _{(X^{(1/r)}%
)^{\prime}}, \qquad0<t<1.
\]

Since \(X\) satisfies an \(\alpha\)-lower estimate, it follows from
Remark~\ref{rem:convexification_estimates} that \(X^{(1/r)}\)
satisfies a \(\beta\)-lower estimate with
\[
\beta=\frac{\alpha}{r},
\]
and therefore \((X^{(1/r)})^{\prime}\) satisfies a
\(\beta^{\prime}\)-upper estimate, where
\[
\beta^{\prime}=\frac{\beta}{\beta-1} = \frac{\alpha}{\alpha-r}.
\]

Let $k\in\mathbb{N}$ be such that
\[
t\in(2^{-(k+1)},2^{-k}],
\]
and set
\[
I_{j}=(2^{-(j+1)},2^{-j}],\qquad j\ge0.
\]
Define
\[
w_{j}(s)=\frac{\psi(s)^{r}}{s}\chi_{I_{j}}(s), \qquad
W_{k}(s)=\sum_{j=0}^{k} w_{j}(s) =
\frac{\psi(s)^{r}}{s}\chi_{(2^{-(k+1)},1]}(s).
\]
Since $\chi_{(t,1]}\le\chi_{(2^{-(k+1)},1]}$, we obtain
\begin{equation}
\left\Vert \frac{\psi(s)^{r}}{s}\chi_{(t,1]}(s)\right\Vert _{(X^{(1/r)}%
)^{\prime}} \le\|W_{k}\|_{(X^{(1/r)})^{\prime}}. \label{eq:kernel_reduce}%
\end{equation}

The functions $w_{j}$ have pairwise disjoint supports and, since
$(X^{(1/r)})^{\prime}$ satisfies a $\beta^{\prime}$-upper estimate,
we have
\begin{equation}
\|W_{k}\|_{(X^{(1/r)})^{\prime}} \preceq\left(  \sum_{j=0}^{k} \|w_{j}%
\|_{(X^{(1/r)})^{\prime}}^{\beta^{\prime}} \right)
^{1/\beta^{\prime}}.
\label{eq:upper_estimate_dual}%
\end{equation}

For each $j$,
\[
\|w_{j}\|_{(X^{(1/r)})^{\prime}} \le\sup_{s\in
I_{j}}\frac{\psi(s)^{r}}{s}\,
\|\chi_{I_{j}}\|_{(X^{(1/r)})^{\prime}}.
\]
Since $2^{-(j+1)}\le s\le2^{-j}$ on $I_{j}$ and
$\psi\in\mathcal{A}_{0}$, we have
\[
\sup_{s\in I_{j}}\frac{\psi(s)^{r}}{s} \preceq\frac{\psi(2^{-j})^{r}%
}{2^{-(j+1)}}.
\]
Moreover, by (\ref{funddual}),
\[
\|\chi_{I_{j}}\|_{(X^{(1/r)})^{\prime}} = \frac{|I_{j}|}{\varphi_{X^{(1/r)}%
}(|I_{j}|)} = \frac{2^{-(j+1)}}{\varphi_{X}(2^{-(j+1)})^{r}}.
\]
Hence
\begin{equation}
\|w_{j}\|_{(X^{(1/r)})^{\prime}} \preceq\left(  \frac{\psi(2^{-j})}%
{\varphi_{X}(2^{-j})} \right)  ^{r}. \label{eq:block_estimate}%
\end{equation}

Substituting (\ref{eq:block_estimate}) into
(\ref{eq:upper_estimate_dual}), we get
\[
\|W_{k}\|_{(X^{(1/r)})^{\prime}} \preceq\left(  \sum_{j=0}^{k}
\left( \frac{\psi(2^{-j})}{\varphi_{X}(2^{-j})} \right)
^{r\beta^{\prime}} \right) ^{1/\beta^{\prime}}.
\]

Estimating the sum by the supremum gives
\begin{equation}
\|W_{k}\|_{(X^{(1/r)})^{\prime}} \preceq(k+1)^{1/\beta^{\prime}}
\sup_{0\le j\le k} \left(  \frac{\psi(2^{-j})}{\varphi_{X}(2^{-j})}
\right)  ^{r}.
\label{eq:kernel_prelog}%
\end{equation}

Since $t\in(2^{-(k+1)},2^{-k}]$, we have
\[
\{2^{-j}:0\le j\le k\}\subset(t,1),
\]
and therefore
\[
\sup_{0\le j\le k}\frac{\psi(2^{-j})}{\varphi_{X}(2^{-j})} \le\sup
_{t<u<1}\frac{\psi(u)}{\varphi_{X}(u)} = M(t).
\]
Hence (\ref{eq:kernel_prelog}) yields
\[
\|W_{k}\|_{(X^{(1/r)})^{\prime}}
\preceq(k+1)^{1/\beta^{\prime}}\,M(t)^{r}.
\]

Finally, since $t\in(2^{-(k+1)},2^{-k}]$, one has
\[
k\log2\le\log\frac1t < (k+1)\log2,
\]
and therefore
\[
k+1\simeq\log\frac{e}{t}.
\]
Thus
\[
\|W_{k}\|_{(X^{(1/r)})^{\prime}} \preceq(\log(e/t))^{1/\beta^{\prime}%
}\,M(t)^{r}.
\]
Combining this with (\ref{eq:kernel_reduce}), we obtain
\[
\left\Vert \frac{\psi(s)^{r}}{s}\chi_{(t,1]}(s)\right\Vert _{(X^{(1/r)}%
)^{\prime}} \preceq(\log(e/t))^{1/\beta^{\prime}}\,M(t)^{r}.
\]

Substituting this estimate into the previous H\"older inequality
yields
\[
(|f|^{r})^{**}(t)-(|f|^{r})^{**}(1) \preceq(\log(e/t))^{1/\beta^{\prime}%
}M(t)^{r} \left\Vert
\frac{O(|f|^{r},\cdot)^{1/r}}{\psi(\cdot)}\right\Vert _{X}^{r},
\]
which proves (\ref{eq:critical_general}).

\medskip

\noindent\textbf{2)} By assumption, $X$ satisfies an $\alpha$-lower
estimate and $Y$ satisfies an $\alpha$-upper estimate. Hence $(X,Y)$
is an $\alpha$-Berezhnoi pair, and therefore the convexified couple
$(X^{(1/r)},Y^{(1/r)})$ is an $(\alpha/r)$-Berezhnoi pair. Consider
the Hardy-type operator
\[
\overline T_{\psi,r}g(t)=\int_{t}^{1} \psi(s)^{r}
g(s)\,\frac{ds}{s}, \qquad0<t<1.
\]
By Theorem~\ref{inclu},
\[
\overline Q_{\psi,r}:X\to Y \quad\text{bounded} \Longleftrightarrow
\quad\overline T_{\psi,r}:X^{(1/r)}\to Y^{(1/r)}
\quad\text{bounded}.
\]

Hence, by Theorem~\ref{thm:Qpsi_rconc}, the boundedness of
$\overline Q_{\psi,r}$ follows once we verify
\begin{equation}
\sup_{0<x<1} \varphi_{Y^{(1/r)}}(x) \left\|
\frac{\psi(s)^{r}}{s}\chi _{(x,1]}(s) \right\|
_{(X^{(1/r)})^{\prime}} <\infty.
\label{eq:Berezhnoi_check}%
\end{equation}

From the proof of part~\textup{(1)} we already know that
\[
\left\|  \frac{\psi(s)^{r}}{s}\chi_{(x,1]}(s) \right\|
_{(X^{(1/r)})^{\prime}
} \preceq(\log(e/x))^{1/\beta^{\prime}}\,M(x)^{r}, \qquad\beta^{\prime}%
=\frac{\alpha}{\alpha-r}.
\]
Substituting this estimate into (\ref{eq:Berezhnoi_check}), we
obtain
\[
\sup_{0<x<1}
\varphi_{Y^{(1/r)}}(x)\,(\log(e/x))^{1/\beta^{\prime}}\,M(x)^{r}
<\infty.
\]
Since
\[
\varphi_{Y^{(1/r)}}(x)=\varphi_{Y}(x)^{r},
\]
this condition is precisely
\[
\sup_{0<x<1}
\varphi_{Y}(x)^{r}\,(\log(e/x))^{1/\beta^{\prime}}\,M(x)^{r}
<\infty,
\]
which holds by assumption. Therefore
\[
\overline T_{\psi,r}:X^{(1/r)}\to Y^{(1/r)}
\]
is bounded, and hence so is
\[
\overline Q_{\psi,r}:X\to Y.
\]

Finally, by the implication $(iii)\Rightarrow(i)$ in
Theorem~\ref{inclu},
\[
\Vert f\Vert_{Y}\preceq\left\Vert
\frac{O(|f|^{r},\cdot)^{1/r}}{\psi(\cdot )}\right\Vert _{X}+\Vert
f\Vert_{L^{r}},
\]
which completes the proof.
\end{proof}

\medskip\noindent\textbf{Trudinger-type embeddings.}

The estimates in Theorem~\ref{thm:critical_general} depend on the
asymptotic behaviour of the deviation function \(M\). We first
consider the case in which \(M\) is bounded. This corresponds to the
upper critical regime and leads to Trudinger-type integrability.

\begin{theorem}
Let \(0<r\leq1\). Let \(X\) be an r.i.\ space satisfying an
\(\alpha\)-lower estimate for some \(\alpha>1\), let
\(\psi\in\mathcal{A}_{0}\), and set
\[
\beta^{\prime}=\frac{\alpha}{\alpha-r}.
\]
Assume that the deviation function \(M\), defined in \((\ref{Mt})\),
is bounded. Then there exist constants \(c,C>0\) such that, for
every measurable \(f\) such that \(A<\infty\) and \(f\in L^r\),
\begin{equation}
\int_{0}^{1} \exp\!\left( c\left(
\frac{(|f|^{r})^{**}(t)-(|f|^{r})^{**}(1)}{A}
\right)^{\beta^{\prime}} \right)\,dt \le C,
\label{eq:trudinger_exp_fr_fss}
\end{equation}
where
\[
A= \left\Vert \frac{O(|f|^{r},\cdot)^{1/r}}{\psi(\cdot)} \right\Vert
_{X}^{r}, \qquad (|f|^{r})^{**}(1)=\int_{0}^{1}(|f|^{r})^{*}(s)\,ds.
\]
Here, if \(A=0\), the quotient in
\((\ref{eq:trudinger_exp_fr_fss})\) is understood in the trivial
limiting sense.

Moreover,
\[
(|f|^{r})^{**}(t)-(|f|^{r})^{**}(1) \in \exp L^{\beta^{\prime}},
\]
with
\[
\left\Vert (|f|^{r})^{**}(\cdot)-(|f|^{r})^{**}(1) \right\Vert_{\exp
L^{\beta^{\prime}}} \preceq A.
\]
Finally, if
\[
m:=(|f|^{r})^{**}(1),
\]
then
\begin{equation}
\int_{0}^{1} \exp\!\left( c\left( \frac{(|f|^{r})^{**}(t)}{A+m}
\right)^{\beta^{\prime}} \right)\,dt \le C .
\label{eq:trudinger_exp_shifted_fss}
\end{equation}
\end{theorem}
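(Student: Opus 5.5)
The plan is to deduce everything from the pointwise critical estimate (\ref{eq:critical_general}) of Theorem~\ref{thm:critical_general}(1), using the boundedness of $M$. Set $F(t):=(|f|^{r})^{**}(t)-(|f|^{r})^{**}(1)$. This function is nonnegative and decreasing on $(0,1)$, since $(|f|^r)^{**}$ is decreasing. If $\|M\|_\infty=:K<\infty$, then (\ref{eq:critical_general}) gives
\[
F(t)\le C K^{r} A \left(\log\frac{e}{t}\right)^{1/\beta'},\qquad 0<t<1 .
\]
When $A=0$ this forces $F\equiv0$, and the inequality (\ref{eq:trudinger_exp_fr_fss}) is trivial, with integrand $1$. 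When $A>0$, raising to the power $\beta'$ gives $(F(t)/A)^{\beta'}\le (CK^r)^{\beta'}\log(e/t)$.

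Choose $c:=\tfrac12 (CK^r)^{-\beta'}$. Then
\[
\exp\bigl(c(F(t)/A)^{\beta'}\bigr)\le (e/t)^{1/2},
\]
which is integrable on $(0,1)$ with integral $2e^{1/2}$. This yields (\ref{eq:trudinger_exp_fr_fss}) with $C=2\sqrt e$, a constant independent of $f$. The hypothesis $f\in L^r$ is used only to make $(|f|^r)^{**}(1)$ finite, so that $F$ is well defined.

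For the $\exp L^{\beta'}$ statement, I use the Orlicz (Luxemburg) norm with Young function $\Phi(u)=e^{u^{\beta'}}-1$, or equivalently the characterization $\|g\|_{\exp L^{\beta'}}\simeq\sup_t g^*(t)(\log(e/t))^{-1/\beta'}$. Since $F$ is decreasing, $F^*=F$, so the pointwise bound above gives the conclusion immediately. Alternatively, from the integral bound, $\int\Phi(c^{1/\beta'}F/A)\le C-1$, and a standard convexity rescaling (dividing by $\max(1,C-1)$) gives $\|F\|_{\exp L^{\beta'}}\preceq A$.

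For (\ref{eq:trudinger_exp_shifted_fss}), write $(|f|^r)^{**}(t)=F(t)+m$. Then
\[
\frac{(|f|^r)^{**}(t)}{A+m}\le \frac{F(t)}{A}\cdot\frac{A}{A+m}+\frac{m}{A+m}\le \max\Bigl(\frac{F(t)}{A},1\Bigr)\cdot 1 ,
\]
more precisely it is bounded by $\theta\, (F/A)+(1-\theta)$ with $\theta=A/(A+m)$. By convexity of $u\mapsto u^{\beta'}$ (here $\beta'\ge1$), this gives $((|f|^r)^{**}/(A+m))^{\beta'}\le (F/A)^{\beta'}+1$. Hence the integrand is at most $e^{c}\exp(c(F/A)^{\beta'})$, and the bound follows with constant $e^{c}C$. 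The degenerate cases $A=0$ (where $F\equiv0$) or $m=0$ are handled directly. The only step requiring care is this shift, together with choosing $c$ small enough to keep $(e/t)^{c(CK^r)^{\beta'}}$ integrable; no real obstacle is expected beyond checking that all constants are independent of $f$.
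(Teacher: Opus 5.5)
Your proposal is correct and follows essentially the same route as the paper. Both arguments use the pointwise bound from Theorem~\ref{thm:critical_general}(1) with $M$ bounded, choose $c$ so that $(e/t)^{c C_0^{\beta'}}$ is integrable, reformulate this as an $\exp L^{\beta'}$ (Luxemburg) bound, and shift by $m$ at the end. The one cosmetic difference is the shift: you write $(F+m)/(A+m)$ as a convex combination of $F/A$ and $1$ and use convexity of $u\mapsto u^{\beta'}$, which lets you keep the same $c$, whereas the paper uses $(H+m)^{\beta'}\le C_{\beta'}(H^{\beta'}+m^{\beta'})$ and then shrinks $c$.
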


\begin{proof}
If \(A=0\), then the estimate below gives
\[
(|f|^r)^{**}(t)=(|f|^r)^{**}(1), \qquad 0<t<1,
\]
and the conclusion is immediate. Hence assume \(A>0\).

Since \(M\) is bounded, Theorem~\ref{thm:critical_general} gives,
for \(0<t<1\),
\begin{equation}
(|f|^{r})^{**}(t)-(|f|^{r})^{**}(1) \le C_{0}\, \left(\log\frac
et\right)^{1/\beta^{\prime}} \left\Vert
\frac{O(|f|^{r},\cdot)^{1/r}}{\psi(\cdot)} \right\Vert_{X}^{r}.
\label{eq:crit_bound_fss_C0}
\end{equation}
Set
\[
m=(|f|^{r})^{**}(1), \qquad H(t)=(|f|^{r})^{**}(t)-m.
\]
Then \((\ref{eq:crit_bound_fss_C0})\) yields
\[
H(t)\le C_{0}A \left(\log\frac et\right)^{1/\beta^{\prime}}, \qquad
0<t<1.
\]
Hence
\[
\left(\frac{H(t)}{A}\right)^{\beta^{\prime}} \le
C_{0}^{\beta^{\prime}}\log\frac et, \qquad 0<t<1.
\]
Choose \(c>0\) such that
\[
cC_{0}^{\beta^{\prime}}<1.
\]
Then
\[
\exp\!\left( c\left(\frac{H(t)}{A}\right)^{\beta^{\prime}} \right)
\le \left(\frac et\right)^{cC_{0}^{\beta^{\prime}}}.
\]
Since \(cC_{0}^{\beta^{\prime}}<1\), the right-hand side is
integrable on \((0,1)\). This proves
\((\ref{eq:trudinger_exp_fr_fss})\).

The corresponding \(\exp L^{\beta'}\)-norm estimate is just the
standard Luxemburg-norm reformulation of
\((\ref{eq:trudinger_exp_fr_fss})\) (see \cite{BS}).

It remains to prove the shifted estimate. Since
\[
(|f|^{r})^{**}(t)=H(t)+m,
\]
and since \(\beta'>0\), there is a constant \(C_{\beta'}\) such that
\[
(H(t)+m)^{\beta'} \le
C_{\beta'}\bigl(H(t)^{\beta'}+m^{\beta'}\bigr).
\]
Therefore
\[
\left( \frac{(|f|^{r})^{**}(t)}{A+m} \right)^{\beta'} \le C_{\beta'}
\left[ \left(\frac{H(t)}{A+m}\right)^{\beta'} +
\left(\frac{m}{A+m}\right)^{\beta'} \right].
\]
Since \(A\le A+m\) and \(m/(A+m)\le1\), we get
\[
\left( \frac{(|f|^{r})^{**}(t)}{A+m} \right)^{\beta'} \le C_{\beta'}
\left[ \left(\frac{H(t)}{A}\right)^{\beta'} +1 \right].
\]
Taking \(c>0\) smaller if necessary, depending only on \(\beta'\)
and the previous constants, the estimate
\((\ref{eq:trudinger_exp_shifted_fss})\) follows from
\((\ref{eq:trudinger_exp_fr_fss})\).
\end{proof}

\medskip\noindent\textbf{Hansson-type embeddings.}

We now describe the logarithmic endpoint targets naturally
associated with the critical regime. Following the terminology
motivated by Hansson's classical endpoint embedding \cite{Hansson},
we call these spaces Hansson-type targets.

Throughout the rest of this subsection, we restrict ourselves to the
natural critical case in which the deviation function \(M\), defined
in \((\ref{Mt})\), carries no residual power contribution, namely
\[
\underline{\beta}_M=\overline{\beta}_M=0. \label{Hzeroind}
\]
This assumption is what leads to a purely logarithmic endpoint
correction.

\begin{theorem}
\label{thm:hansson_Lalpha} Let \(0<r\le1\), let \(\psi\in\mathcal
A_0\), and let \(X\) be an r.i.\ space satisfying an
\(\alpha\)-lower estimate for some \(\alpha>1\). Let \(\mathcal
H_{\alpha,r,M}\) be the space defined by
\[
\mathcal H_{\alpha,r,M} := \left\{ f\in L^0: \|f\|_{\mathcal
H_{\alpha,r,M}}<\infty \right\},
\]
where
\[
\|f\|_{\mathcal H_{\alpha,r,M}} := \left( \int_0^1 \left(
\frac{f^{**}(t)} {(\log(e/t))^{1/r}M(t)} \right)^\alpha \frac{dt}{t}
\right)^{1/\alpha}.
\]
Then there exists a constant \(C>0\) such that, for every measurable
\(f\),
\begin{equation}
\|f\|_{\mathcal H_{\alpha,r,M}} \le C\left( \left\|
\frac{O(|f|^r,\cdot)^{1/r}}{\psi(\cdot)} \right\|_X + \|f\|_{L^r}
\right). \label{eq:hansson_Lalpha_embedding}
\end{equation}
\end{theorem}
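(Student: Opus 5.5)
The plan is to prove (\ref{eq:hansson_Lalpha_embedding}) directly, by a dyadic argument that runs the proof of Theorem~\ref{thm:critical_general}(1) block by block. I would not go through Theorem~\ref{thm:Qpsi_rconc}, because checking that \(\mathcal H_{\alpha,r,M}\) is an r.i.\ space with an \(\alpha\)-upper estimate looks delicate. Write
\[
A=\left\|\frac{O(|f|^r,\cdot)^{1/r}}{\psi(\cdot)}\right\|_X^r,\qquad m=(|f|^r)^{**}(1)=\|f\|_{L^r}^r,\qquad g=\frac{O(|f|^r,\cdot)}{\psi(\cdot)^r}.
\]
Then \(\|g\|_{X^{(1/r)}}=A\), and, as in the proof of Theorem~\ref{inclu},
\[
G(t):=(|f|^r)^{**}(t)=\overline T_{\psi,r}g(t)+m .
\]

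\emph{Step 1: reduce from \(f^{**}\) to \(G^{1/r}\).} Since \((f^*)^r\le (|f|^r)^{**}\) pointwise, we have \(f^*\le G^{1/r}\), and hence \(f^{**}\le P_1(G^{1/r})\), where \(P_1\) is the Hardy averaging operator. Set \(w(t)=(\log(e/t))^{1/r}M(t)\). Because \(\underline\beta_M=\overline\beta_M=0\), the weight \(w\) has zero indices, so for every \(\varepsilon>0\) it is comparable, up to \(t^{\pm\varepsilon}\), to monotone functions. Hardy's inequality in \(L^\alpha(t^{-1}w(t)^{-\alpha}dt)\) should therefore hold for the averaging operator: the power-weight version \(L^\alpha(t^{a}dt)\) requires \(a<\alpha-1\), here \(a=-1\pm\alpha\varepsilon\), and \(\alpha>1\) leaves room. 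This gives
\[
\|f\|_{\mathcal H_{\alpha,r,M}}\preceq\Big(\int_0^1\Big(\frac{G(t)^{1/r}}{w(t)}\Big)^\alpha\frac{dt}{t}\Big)^{1/\alpha}.
\]
I would carry out this weighted Hardy step via the index estimates for \(M\) together with the monotonicity of \(\log(e/t)\).

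\emph{Step 2: dyadic estimate of \(\overline T_{\psi,r}g\).} Let \(I_j=(2^{-(j+1)},2^{-j}]\) and \(a_j=\|g\chi_{I_j}\|_{X^{(1/r)}}\). By Remark~\ref{rem:convexification_estimates}, \(X^{(1/r)}\) satisfies a \(p\)-lower estimate with \(p=\alpha/r>1\), so
\[
\Big(\sum_j a_j^{p}\Big)^{1/p}\preceq A.
\]
For \(t\in I_k\), Hölder's inequality on each block together with (\ref{eq:block_estimate}) gives
\[
\overline T_{\psi,r}g(t)\le\sum_{j\le k}a_j\,\|w_j\|_{(X^{(1/r)})'}\preceq\sum_{j\le k}a_j\Big(\frac{\psi(2^{-j})}{\varphi_X(2^{-j})}\Big)^r\le M(2^{-(k+1)})^r\sum_{j\le k}a_j .
\]
Since \(M\) is monotone with zero indices, \(M(2^{-(k+1)})\simeq M(t)\) on \(I_k\); likewise \(\log(e/t)\simeq k+1\). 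Hence
\[
\int_0^1\Big(\frac{(\overline T_{\psi,r}g)^{1/r}}{w}\Big)^\alpha\frac{dt}{t}\preceq\sum_k\Big(\frac{1}{k+1}\sum_{j\le k}a_j\Big)^{p}\preceq\sum_j a_j^p\preceq A^p .
\]
The middle inequality is the discrete Hardy inequality, valid because \(p>1\). This is exactly where the logarithmic factor \((\log(e/t))^{1/r}\) is consumed, and it is the step I expect to be the heart of the matter.

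\emph{Step 3: the constant term.} Since \(M(t)\ge\psi(1^-)/\varphi_X(1)>0\), we get
\[
\int_0^1\Big(\frac{m^{1/r}}{w}\Big)^\alpha\frac{dt}{t}\preceq m^{p}\sum_k(k+1)^{-p}\preceq \|f\|_{L^r}^{\alpha},
\]
again using \(p>1\). Combining the two pieces through \(G^{1/r}\preceq(\overline T_{\psi,r}g)^{1/r}+m^{1/r}\) with Step 1 yields (\ref{eq:hansson_Lalpha_embedding}). The main obstacle I anticipate is the weighted Hardy inequality in Step 1 for the slowly varying weight. If it proves awkward, I would instead handle \(f^{**}\) at the dyadic level, bounding \(f^{**}(2^{-k})\le\sum_{i\ge k}2^{k-i}G(2^{-(i+1)})^{1/r}\) up to constants, and absorb the geometric sum using the slow variation of \(w\).
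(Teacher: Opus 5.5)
Your proof is correct, and it takes a genuinely different route from the paper's.

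\textbf{The paper's route.} The paper first shows that $\|\cdot\|_{\mathcal H_{\alpha,r,M}}$ is equivalent to its $f^*$-version. From this it deduces that $\mathcal H_{\alpha,r,M}$ satisfies an $\alpha$-upper estimate, and it computes $\varphi_{\mathcal H_{\alpha,r,M}}(x)\preceq(\log(e/x))^{1/\alpha-1/r}M(x)^{-1}$. It then applies Theorem~\ref{thm:critical_general}(2), which rests on Berezhnoi's criterion (Theorem~\ref{thm:Qpsi_rconc}).

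The pointwise bound of Theorem~\ref{thm:critical_general}(1) alone would not suffice. In your notation it gives $(G-m)^{1/r}/w\preceq A^{1/r}(\log(e/t))^{-1/\alpha}$, and $\int_0^{1/2}(\log(e/t))^{-1}\,dt/t=\infty$. The loss comes from applying H\"older once on $(t,1)$ and then replacing the $\ell^{\beta'}$-sum of dual block norms by a supremum. The paper recovers this loss through the external Berezhnoi theorem.

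\textbf{Your route.} You apply H\"older block by block instead. You keep the $f$-dependent sequence $(a_j)$, which lies in $\ell^{\alpha/r}$ by the lower estimate of $X^{(1/r)}$. The discrete Hardy inequality then absorbs the factor $1/(k+1)$, so in effect you prove by hand the sufficiency half of Berezhnoi's theorem for this particular target.

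\textbf{What each buys.} Your argument is self-contained and elementary: H\"older, the $\alpha/r$-lower estimate, and discrete Hardy. It needs neither the upper estimate nor the fundamental function of $\mathcal H_{\alpha,r,M}$, and it shows exactly where the logarithm is consumed. The paper's argument is modular: Theorem~\ref{thm:critical_general}(2) yields embeddings into any target with an $\alpha$-upper estimate that satisfies the fundamental-function condition, and $\mathcal H_{\alpha,r,M}$ is one instance.

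\textbf{Tightening Step 1.} The cleanest rigorous version of your weighted Hardy step is the paper's own device. Since $(\log(e/t))^{1/r}M(t)$ has vanishing indices, $t^{1/\alpha}(\log(e/t))^{1/r}M(t)$ is quasi-increasing. This reduces the weighted inequality to the unweighted Hardy inequality in $L^\alpha$. The argument works for any $h\ge0$, in particular for $h=G^{1/r}$.
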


\begin{proof}
Set
\[
H:=\mathcal H_{\alpha,r,M}, \qquad L(t):=(\log(e/t))^{1/r}.
\]

We first prove that the norm defining \(H\) is equivalent to its
\(f^*\)-version. Since \(f^*\le f^{**}\), one immediately has
\[
\left( \int_0^1 \left( \frac{f^*(t)}{L(t)M(t)} \right)^\alpha
\frac{dt}{t} \right)^{1/\alpha} \le \|f\|_H.
\]

We prove the converse inequality. Since \(L(t)=(\log(e/t))^{1/r}\)
has vanishing fundamental indices and, by \((\ref{Hzeroind})\),
\[
\underline\beta_M=\overline\beta_M=0,
\]
the product \(LM\) also has vanishing fundamental indices.  Hence,
by the definition of the fundamental indices \((\ref{fundind})\),
the function
\[
A(t):=t^{1/\alpha}L(t)M(t)
\]
has lower fundamental index \(1/\alpha>0\). In particular, \(A\) is
quasi-increasing on \((0,1/2)\). Thus, for \(0<s<t<1/2\),
\[
A(s)\preceq A(t).
\]
Equivalently,
\[
\frac{1}{L(t)M(t)} \preceq \left(\frac{t}{s}\right)^{1/\alpha}
\frac{1}{L(s)M(s)}, \qquad 0<s<t<1/2.
\]
Therefore, for \(0<t<1/2\),
\begin{align*}
\frac{f^{**}(t)}{t^{1/\alpha}L(t)M(t)} &=
\frac{1}{t^{1+1/\alpha}L(t)M(t)}
\int_0^t f^*(s)\,ds  \\
&\preceq \frac1t \int_0^t \frac{f^*(s)} {s^{1/\alpha}L(s)M(s)} \,ds.
\end{align*}
Consequently,
\begin{align*}
\int_0^{1/2} \left( \frac{f^{**}(t)} {L(t)M(t)} \right)^\alpha
\frac{dt}{t} &= \int_0^{1/2} \left( \frac{f^{**}(t)}
{t^{1/\alpha}L(t)M(t)} \right)^\alpha
dt  \\
&\preceq \int_0^{1/2} \left( \frac1t \int_0^t \frac{f^*(s)}
{s^{1/\alpha}L(s)M(s)} \,ds \right)^\alpha dt .
\end{align*}
By Hardy's inequality in \(L^\alpha(0,1)\) (see \cite{BS}),
\[
\int_0^{1/2} \left( \frac1t \int_0^t \frac{f^*(s)}
{s^{1/\alpha}L(s)M(s)} \,ds \right)^\alpha dt \preceq \int_0^{1/2}
\left( \frac{f^*(t)} {t^{1/\alpha}L(t)M(t)} \right)^\alpha dt.
\]
Thus
\[
\int_0^{1/2} \left( \frac{f^{**}(t)} {L(t)M(t)} \right)^\alpha
\frac{dt}{t} \preceq \int_0^{1/2} \left( \frac{f^*(t)} {L(t)M(t)}
\right)^\alpha \frac{dt}{t}.
\]

It remains to estimate the part on \((1/2,1)\). On each interval
away from the origin, the functions \(L\) and \(M\) are equivalent
to positive constants. Since \(f^{**}\) is decreasing,
\[
\left( \int_{1/2}^1 \left( \frac{f^{**}(t)} {L(t)M(t)}
\right)^\alpha \frac{dt}{t} \right)^{1/\alpha} \preceq f^{**}(1/2).
\]
The same is true on \((1/4,1/2)\), and \(f^{**}(t)\ge f^{**}(1/2)\)
for \(t\in(1/4,1/2)\). Hence
\[
f^{**}(1/2) \preceq \left( \int_{1/4}^{1/2} \left( \frac{f^{**}(t)}
{L(t)M(t)} \right)^\alpha \frac{dt}{t} \right)^{1/\alpha}.
\]
Using the estimate already proved on \((0,1/2)\), we get
\[
f^{**}(1/2) \preceq \left( \int_0^1 \left( \frac{f^*(t)} {L(t)M(t)}
\right)^\alpha \frac{dt}{t} \right)^{1/\alpha}.
\]
Therefore
\[
\left( \int_{1/2}^1 \left( \frac{f^{**}(t)} {L(t)M(t)}
\right)^\alpha \frac{dt}{t} \right)^{1/\alpha} \preceq \left(
\int_0^1 \left( \frac{f^*(t)} {L(t)M(t)} \right)^\alpha \frac{dt}{t}
\right)^{1/\alpha}.
\]

Combining the estimates on \((0,1/2)\) and \((1/2,1)\), we obtain
\begin{equation}
\|f\|_H \simeq \left( \int_0^1 \left( \frac{f^*(t)}
{(\log(e/t))^{1/r}M(t)} \right)^\alpha \frac{dt}{t}
\right)^{1/\alpha}. \label{eq:Halpha_star_equiv}
\end{equation}

In particular, the norm of \(H\) is equivalent to an
\(L^\alpha(dt/t)\)-norm with weight, and therefore \(H\) satisfies
an \(\alpha\)-upper estimate.

We now verify the fundamental-function condition in
Theorem~\ref{thm:critical_general}\textup{(2)}. Let
\[
\beta'=\frac{\alpha}{\alpha-r}.
\]
By \((\ref{eq:Halpha_star_equiv})\), for \(0<x<1\),
\[
\varphi_H(x) \simeq \left( \int_0^x \frac{dt}
{t(\log(e/t))^{\alpha/r}M(t)^\alpha} \right)^{1/\alpha}.
\]
Since \(M\) is decreasing,
\[
\frac1{M(t)^\alpha} \le \frac1{M(x)^\alpha}, \qquad 0<t<x.
\]
Therefore
\[
\varphi_H(x) \preceq \frac1{M(x)} \left( \int_0^x \frac{dt}
{t(\log(e/t))^{\alpha/r}} \right)^{1/\alpha}.
\]
As \(\alpha/r>1\),
\[
\int_0^x \frac{dt} {t(\log(e/t))^{\alpha/r}} \simeq
(\log(e/x))^{1-\alpha/r}.
\]
Consequently,
\begin{equation*}
\varphi_H(x) \preceq \frac1{(\log(e/x))^{1/r-1/\alpha}M(x)}.
\end{equation*}
Raising this estimate to the power \(r\), we get
\[
\varphi_H(x)^r \preceq \frac1{(\log(e/x))^{1-r/\alpha}M(x)^r}.
\]
Since
\[
\frac1{\beta'}=1-\frac r\alpha,
\]
it follows that
\begin{equation}
\sup_{0<x<1} \varphi_H(x)^r (\log(e/x))^{1/\beta'} M(x)^r <\infty.
\label{eq:Halpha_condition}
\end{equation}

We have proved that \(X\) satisfies an \(\alpha\)-lower estimate by
hypothesis, that \(H\) satisfies an \(\alpha\)-upper estimate, and
that \((\ref{eq:Halpha_condition})\) holds. Hence
Theorem~\ref{thm:critical_general}\textup{(2)} applies with this
choice of \(H\), and gives
\[
\|f\|_H \preceq \left\| \frac{O(|f|^r,\cdot)^{1/r}}{\psi(\cdot)}
\right\|_X + \|f\|_{L^r}.
\]
Since \(H=\mathcal H_{\alpha,r,M}\), this is precisely
\((\ref{eq:hansson_Lalpha_embedding})\).
\end{proof}

The preceding theorem gives a canonical Hansson-type endpoint which
is always available. We now introduce a second family of targets,
still adapted to the geometry of \(X\), by allowing a
convexification of the underlying space. The case \(s=1\), which
corresponds to \(\rho=\alpha\), gives the intrinsic Hansson-type
target associated with \(X\).

We shall also use the following Sharpley-type estimate, related to
the spaces \(\Lambda^p(X)\) introduced in \cite{Shar}. Its proof is
included in Appendix~\ref{app:sharpley}.

\begin{lemma}
\label{Sharpley} Let \(X\) be an r.i.\ space satisfying a
\(p\)-upper estimate for some \(p>1\). Define
\[
\|f\|_{\Lambda^{p}(X)} := \left( \int_0^1
\left(f^*(t)\varphi_X(t)\right)^p \frac{dt}{t} \right)^{1/p}.
\]
Then
\[
\|f\|_X \preceq \|f\|_{\Lambda^p(X)}, \qquad f\in L^0.
\]
\end{lemma}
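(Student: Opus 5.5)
The plan is to prove $\|f\|_X \preceq \|f\|_{\Lambda^p(X)}$ by a dyadic decomposition of $f^*$ combined with the $p$-upper estimate. Since $X$ is rearrangement invariant, $\|f\|_X=\|f^*\|_X$, so we may take $f=f^*$ nonnegative and decreasing on $(0,1)$. With $I_j=(2^{-(j+1)},2^{-j}]$, $j\ge0$, we split
\[
f^*=\sum_{j\ge0} f^*\chi_{I_j},
\]
a sum of pairwise disjointly supported functions. The $p$-upper estimate (first applied to finite partial sums, then passed to the limit by the Fatou property of the Banach function space $X$) gives
\[
\|f^*\|_X\preceq\Big(\sum_{j\ge0}\|f^*\chi_{I_j}\|_X^p\Big)^{1/p}.
\]

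Each block is estimated by monotonicity: on $I_j$ we have $f^*\le f^*(2^{-(j+1)})$, hence
\[
\|f^*\chi_{I_j}\|_X\le f^*(2^{-(j+1)})\,\|\chi_{I_j}\|_X = f^*(2^{-(j+1)})\,\varphi_X(2^{-(j+1)}),
\]
where the equality uses rearrangement invariance. Because the endpoint is the left one, this value has to be dominated by an integral over the previous dyadic interval $I_{j+1}=(2^{-(j+2)},2^{-(j+1)}]$. There $f^*(t)\ge f^*(2^{-(j+1)})$, and quasi-concavity gives $\varphi_X(t)\ge\varphi_X(2^{-(j+2)})\ge\tfrac12\varphi_X(2^{-(j+1)})$. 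Since $\int_{I_{j+1}}dt/t=\log 2$, we obtain
\[
\big(f^*(2^{-(j+1)})\varphi_X(2^{-(j+1)})\big)^p\preceq\int_{I_{j+1}}\big(f^*(t)\varphi_X(t)\big)^p\,\frac{dt}{t}.
\]

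Summing over $j\ge0$, the intervals $I_{j+1}$ are disjoint and contained in $(0,1)$. Therefore
\[
\sum_j\|f^*\chi_{I_j}\|_X^p\preceq\int_0^1\big(f^*\varphi_X\big)^p\frac{dt}{t}=\|f\|_{\Lambda^p(X)}^p,
\]
which is the claim.

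The main obstacle is purely bookkeeping: the endpoint must be shifted correctly so that the bound uses the larger value of $f^*$ on the neighbouring dyadic block, and the doubling of $\varphi_X$ is supplied by quasi-concavity. The limiting step from finitely many blocks to infinitely many rests on the Fatou property and monotone convergence, and needs no further ingredients.
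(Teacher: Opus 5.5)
Your proof is correct and follows essentially the same route as the paper: you take dyadic blocks $I_j$, apply the $p$-upper estimate to finite partial sums and pass to the limit via the Fatou property, and then compare each block value with the integral over the neighbouring block $I_{j+1}$, using the monotonicity of $f^*$ and the quasi-concavity of $\varphi_X$. The only cosmetic difference is that you decompose $f^*$ itself and use the exact value $\|\chi_{I_j}\|_X=\varphi_X(2^{-(j+1)})$. The paper instead dominates $f^*$ by a step function and bounds $\|\chi_{I_k}\|_X\le\varphi_X(2^{-k})\preceq\varphi_X(2^{-(k+1)})$.
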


\begin{definition}
Let \(X\) be an r.i.\ space, let \(s\ge1\), and let \(M\) be the
deviation function defined in \((\ref{Mt})\). The \(s\)-convexified
Hansson-type space associated with \(X\) is defined by
\[
H_{X,s,r,M} := \left\{ f\in L^0: \|f\|_{H_{X,s,r,M}}<\infty
\right\},
\]
where
\[
\|f\|_{H_{X,s,r,M}} := \left\| \frac{f^{**}(\cdot)}
{\varphi_{X^{(s)}}(\cdot)(\log(e/\cdot))^{1/r}M(\cdot)}
\right\|_{X^{(s)}} .
\]
When \(s=1\), we simply write
\[
H_{X,r,M}:=H_{X,1,r,M};
\]
this is the intrinsic Hansson-type target associated with \(X\).
\end{definition}

\begin{theorem}
\label{thm:hansson_convexified} Let \(0<r\le1\), let
\(\psi\in\mathcal A_0\), and let \(X\) be an r.i.\ space satisfying
an \(\alpha\)-lower estimate and a \(\rho\)-upper estimate, with
\(1<\rho\le\alpha\). Set
\[
s:=\frac{\alpha}{\rho}.
\]
Then there exists a constant \(C>0\) such that, for every measurable
\(f\),
\[
\|f\|_{H_{X,s,r,M}} \le C\left( \left\|
\frac{O(|f|^r,\cdot)^{1/r}}{\psi(\cdot)} \right\|_X + \|f\|_{L^r}
\right).
\]
\end{theorem}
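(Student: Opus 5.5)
The plan is to follow the same scheme as the proof of Theorem~\ref{thm:hansson_Lalpha}: show that $H:=H_{X,s,r,M}$ satisfies an $\alpha$-upper estimate, verify the fundamental-function condition of Theorem~\ref{thm:critical_general}\textup{(2)}, and then apply that theorem, since $X$ satisfies the required $\alpha$-lower estimate by hypothesis.

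\emph{Upper estimate.} First, reduce the $f^{**}$ in the norm of $H$ to $f^{*}$. Write $w(t)=\varphi_{X^{(s)}}(t)L(t)M(t)$ with $L(t)=(\log(e/t))^{1/r}$. Since $\underline\beta_M=\overline\beta_M=0$ and $L$ has vanishing indices, the indices of $w$ coincide with those of $\varphi_{X}^{1/s}$, which lie in $[\underline\alpha_X/s,\overline\alpha_X/s]$. A $\rho$-upper estimate for $X$ with $\rho>1$ forces $\overline\alpha_X<1$. I would argue, as in Lemma~\ref{Mpus}, that $f^{**}/w$ is controlled in $X^{(s)}$ by $f^{*}/w$; equivalently, $P_1$ is bounded on the weighted space $\{g: g/w\in X^{(s)}\}$, which holds because $\overline\alpha_{X^{(s)}}<1$ and the index of $w$ is $<1$. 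This is the delicate point: Lemma~\ref{Mpus} requires $w\in\mathcal A_0$, so I need $\underline\beta_{w}>0$. If that fails, I would instead handle $f^{**}$ via the Boyd-index Hardy inequality for $X^{(s)}$ directly, splitting dyadically.

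By Remark~\ref{rem:convexification_estimates}, $X^{(s)}$ satisfies a $\rho s=\alpha$-upper estimate. Multiplying by a fixed weight and passing to the decreasing function $f^{*}/w$ (which is almost decreasing, up to equivalence) preserves disjointness of supports, so $H$ inherits an $\alpha$-upper estimate. More carefully, the upper estimate for disjoint $f_i$ should be checked through rearrangements; I would invoke the pointwise bound $(\sum f_i)^{**}\le\sum f_i^{**}$ together with $\ell^1\subset\ell^\alpha$ estimates on dyadic blocks.

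\emph{Fundamental function.} For $0<x<1$,
\[
\varphi_H(x)\le\left\|\frac{1}{\varphi_{X^{(s)}}L M}\chi_{(0,x)}\right\|_{X^{(s)}}+\text{(tail from }f^{**}=x/t\text{ on }(x,1)).
\]
The main piece is bounded by $M(x)^{-1}$ times $\|\chi_{(0,x)}/(\varphi_{X^{(s)}}L)\|_{X^{(s)}}$, using that $M$ is decreasing. To estimate this, decompose $(0,x)$ into dyadic blocks $I_j$ and use the $\alpha$-upper estimate of $X^{(s)}$, with $\|\chi_{I_j}\|_{X^{(s)}}\simeq\varphi_{X^{(s)}}(2^{-j})$. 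This gives
\[
\left(\sum_{2^{-j}\le x}(1+j)^{-\alpha/r}\right)^{1/\alpha}\simeq(\log(e/x))^{1/\alpha-1/r},
\]
because $\alpha/r>1$. The tail on $(x,1)$ is similar, and is harmless because $x/t\le1$ and the indices of $\varphi_{X^{(s)}}$ are less than $1$. Hence $\varphi_H(x)^r(\log(e/x))^{1-r/\alpha}M(x)^r$ is bounded, which is exactly the condition of Theorem~\ref{thm:critical_general}\textup{(2)} with $1/\beta'=1-r/\alpha$. The conclusion then follows from that theorem.

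The main obstacle is the $\alpha$-upper estimate for $H$, in particular replacing $f^{**}$ by $f^{*}$ without a positive lower index of $\varphi_X$. If this cannot be done, I would restrict to $\underline\alpha_X>0$, or work with the $f^{*}$-version of the norm and use Lemma~\ref{Sharpley}-type comparisons.
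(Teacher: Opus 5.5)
\textbf{Gap: the $\alpha$-upper estimate for $H:=H_{X,s,r,M}$.} This is exactly the hypothesis you need to apply Theorem~\ref{thm:critical_general}\textup{(2)} with target $H$, and neither of your arguments proves it.

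The $H$-norm is $\|f^{**}/w\|_{X^{(s)}}$, and passing to $f^{*}$ or $f^{**}$ destroys disjointness. For disjoint $f_i$, the $f_i^{*}$ all live on initial intervals $(0,|\mathrm{supp}\,f_i|)$, and $(\sum_i f_i)^{*}\neq\sum_i f_i^{*}$. So nothing is inherited formally from $X^{(s)}$. Your fallback $(\sum_i f_i)^{**}\le\sum_i f_i^{**}$ only gives $\|\sum_i f_i\|_H\le\sum_i\|f_i\|_H$, the trivial $1$-upper estimate. Since $(\sum_i a_i^{\alpha})^{1/\alpha}\le\sum_i a_i$, no $\ell^1\subset\ell^\alpha$ comparison can upgrade it.

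The estimate is true, but it needs a different idea:
\begin{enumerate}
\item For simple $f_i$, cut each level interval of $f^{*}$ into disjoint sets $E_i$ so that $f^{*}\chi_{E_i}$ is equimeasurable with $|f_i|$.
\item Apply the $\alpha$-upper estimate of $X^{(s)}$ to $f^{*}/w=\sum_i f^{*}\chi_{E_i}/w$.
\item Use $(f^{*}\chi_{E_i}/w)^{*}(t)\le f_i^{*}(t/2)/w(t/2)$, with $w$ replaced by an equivalent increasing function, together with $h_{X^{(s)}}(2)\le 2$.
\item Pass to general $f_i$ by the Fatou property.
\end{enumerate}

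\textbf{The index issue is not an obstacle.} Your ``delicate point'' $\underline\beta_w>0$ holds automatically, so no extra hypothesis such as $\underline\alpha_X>0$ is needed. Testing the $\alpha$-lower estimate on $n$ disjoint sets of measure $t/n$ gives $n^{1/\alpha}\varphi_X(t/n)\preceq\varphi_X(t)$, hence $\underline\beta_{\varphi_X}\ge 1/\alpha$ (the same test with compressed copies even gives $\underline\alpha_X\ge1/\alpha$). So $w$ is quasi-increasing and your $f^{**}\to f^{*}$ reduction works.

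\textbf{Fundamental function.} Your computation is correct in outline. On $(x,1)$, however, $1/M(t)\ge 1/M(x)$, so the tail bound also needs the vanishing lower index of $LM$, not only $x/t\le1$.

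\textbf{How the paper proceeds.} The paper never needs an upper estimate or the fundamental function of $H_{X,s,r,M}$. After the $f^{**}\to f^{*}$ reduction, it applies Lemma~\ref{Sharpley} to $Y=X^{(s)}$, which has an $\alpha$-upper estimate by Remark~\ref{rem:convexification_estimates}. It then uses that $f^{*}/w$ is equivalent to a decreasing function and cancels $\varphi_Y$, obtaining
\[
\|f\|_{H_{X,s,r,M}}\preceq\left(\int_0^1\left(\frac{f^{*}(t)}{(\log(e/t))^{1/r}M(t)}\right)^{\alpha}\frac{dt}{t}\right)^{1/\alpha},
\]
that is, $\mathcal H_{\alpha,r,M}\hookrightarrow H_{X,s,r,M}$. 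Theorem~\ref{thm:hansson_Lalpha} then gives the result. Your closing mention of ``Lemma~\ref{Sharpley}-type comparisons'' is exactly this route, and it is the quickest way to close your argument.
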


\begin{proof}
Set
\[
Y:=X^{(s)}.
\]
Since \(X\) satisfies a \(\rho\)-upper estimate and
\(s=\alpha/\rho\), it follows from
Remark~\ref{rem:convexification_estimates} that \(Y\) satisfies an
\(\alpha\)-upper estimate.

Let
\[
L(t):=(\log(e/t))^{1/r}, \qquad w(t):=\varphi_Y(t)L(t)M(t).
\]
Since \(X\) satisfies an \(\alpha\)-lower estimate, testing this
estimate on characteristic functions of pairwise disjoint sets shows
that \(\varphi_X\) has positive lower fundamental index. Indeed,
applying the \(\alpha\)-lower estimate to \(n\) disjoint sets of
measure \(t/n\) gives
\[
n^{1/\alpha}\varphi_X(t/n)\le C\varphi_X(t).
\]
Equivalently,
\[
\varphi_X(\lambda t)\preceq \lambda^{1/\alpha}\varphi_X(t), \qquad
0<\lambda<1,
\]
and hence
\[
m_{\varphi_X}(\lambda)\preceq \lambda^{1/\alpha},\qquad
\lambda\to0^+.
\]
Thus
\[
\underline{\beta}_{\varphi_X}\ge \frac1\alpha.
\]

Therefore
\[
\varphi_Y(t)=\varphi_{X^{(s)}}(t)=\varphi_X(t)^{1/s}
\]
also has positive lower fundamental index. Since \(L\) and \(M\)
have vanishing fundamental indices, the function \(w\) has positive
lower fundamental index. In particular, \(w\) is quasi-increasing on
\((0,1/2)\).

Since \(Y\) satisfies an \(\alpha\)-upper estimate with
\(\alpha>1\), one has
\[
\overline{\alpha}_{Y}\le \frac1\alpha<1
\]
(see, e.g., \cite{LT2,BS}). Hence the Hardy operator
\[
Pg(t):=\frac1t\int_0^t g(s)\,ds
\]
is bounded on \(Y\). Therefore the usual Hardy argument gives
\[
\left\| \frac{f^{**}}{w} \right\|_Y \simeq \left\| \frac{f^*}{w}
\right\|_Y .
\]
Consequently,
\[
\|f\|_{H_{X,s,r,M}} \simeq \left\| \frac{f^*}{w} \right\|_Y .
\]

Since \(Y\) satisfies an \(\alpha\)-upper estimate,
Lemma~\ref{Sharpley} gives
\[
\left\| \frac{f^*}{w} \right\|_Y \preceq \left( \int_0^1 \left[
\left(\frac{f^*}{w}\right)^*(t)\varphi_Y(t) \right]^\alpha
\frac{dt}{t} \right)^{1/\alpha}.
\]
Since \(f^*\) is decreasing and \(w\) is quasi-increasing, the
function \(f^*/w\) is equivalent to a decreasing function. Hence
\[
\left(\frac{f^*}{w}\right)^*(t) \preceq \frac{f^*(t)}{w(t)}.
\]
Therefore
\[
\|f\|_{H_{X,s,r,M}} \preceq \left( \int_0^1 \left(
\frac{f^*(t)\varphi_Y(t)} {\varphi_Y(t)(\log(e/t))^{1/r}M(t)}
\right)^\alpha \frac{dt}{t} \right)^{1/\alpha}.
\]
Cancelling \(\varphi_Y(t)\), we obtain
\[
\|f\|_{H_{X,s,r,M}} \preceq \left( \int_0^1 \left( \frac{f^*(t)}
{(\log(e/t))^{1/r}M(t)} \right)^\alpha \frac{dt}{t}
\right)^{1/\alpha}.
\]
By Theorem~\ref{thm:hansson_Lalpha}, the last expression is bounded
by
\[
C\left( \left\| \frac{O(|f|^r,\cdot)^{1/r}}{\psi(\cdot)} \right\|_X
+ \|f\|_{L^r} \right).
\]
This proves the desired estimate.
\end{proof}


\section{Extension to the quasi-Banach setting}
\label{sec:quasi}

We briefly indicate how the previous results extend to quasi-Banach
rearrangement-invariant spaces.

\begin{definition}
Let \(0<r\le1\). A quasi-Banach r.i.\ space \(X\) is said to be
\emph{\(r\)-convex} if the convexified space \(X^{(1/r)}\) is a
Banach r.i.\ space.
\end{definition}

Although there exist quasi-Banach r.i.\ spaces that fail to be
\(r\)-convex for every \(0<r\le1\) (see \cite{JS}), such examples
are exceptional. In fact, as observed by Grafakos and Kalton, ``all
practical quasi-Banach rearrangement-invariant spaces are
\(r\)-convex for some \(0<r\le1\)'' (see \cite{GK}). For this
reason, we restrict ourselves to \(r\)-convex quasi-Banach spaces.

Let \(X\) be an \(r\)-convex quasi-Banach r.i.\ space and set
\[
\widetilde X:=X^{(1/r)}.
\]
Then \(\widetilde X\) is a Banach r.i.\ space. The oscillation
condition for \(f\) in \(X\) can be rewritten as a Banach condition
for
\[
g:=|f|^r.
\]
Indeed,
\[
O(g,t)=O(|f|^r,t),
\]
and
\[
\left\| \frac{O(|f|^r,\cdot)^{1/r}}{\psi(\cdot)} \right\|_X^r =
\left\| \frac{O(g,\cdot)}{\psi(\cdot)^r} \right\|_{\widetilde X}.
\]
Thus the pair \((X,\psi)\) in the quasi-Banach setting is
transformed into the Banach pair
\[
(\widetilde X,\widetilde\psi), \qquad \widetilde\psi:=\psi^r,
\]
with oscillation exponent equal to \(1\). Notice that
\(\widetilde\psi\in\mathcal A_0\), since
\[
\underline{\beta}_{\widetilde\psi} = r\,\underline{\beta}_{\psi},
\qquad \overline{\beta}_{\widetilde\psi} =
r\,\overline{\beta}_{\psi}.
\]

The relevant structural assumptions are transformed accordingly. If
\(X\) satisfies an \(\alpha\)-lower estimate, then \(\widetilde
X=X^{(1/r)}\) satisfies an \(\alpha/r\)-lower estimate. Similarly,
if \(X\) satisfies a \(\rho\)-upper estimate, then \(\widetilde X\)
satisfies a \(\rho/r\)-upper estimate.

Therefore each of the Banach results proved above applies to
\((\widetilde X,\widetilde\psi)\), with oscillation exponent \(1\),
provided the corresponding lower and upper estimate assumptions are
satisfied by \(\widetilde X\). If \(\widetilde Y\) is the Banach
target obtained in this way, the corresponding quasi-Banach target
is defined by
\[
Y:=\widetilde Y^{(r)}.
\]
Then
\[
\|f\|_Y^r=\||f|^r\|_{\widetilde Y}.
\]
Consequently, an estimate of the form
\[
\|g\|_{\widetilde Y} \le C\left( \left\|
\frac{O(g,\cdot)}{\widetilde\psi(\cdot)} \right\|_{\widetilde X} +
\|g\|_{L^1} \right)
\]
yields, with \(g=|f|^r\),
\[
\|f\|_Y \le C\left( \left\| \frac{O(|f|^r,\cdot)^{1/r}}{\psi(\cdot)}
\right\|_X + \|f\|_{L^r} \right),
\]
with a possibly different constant.

In this way the supercritical, subcritical and critical embeddings
extend to the \(r\)-convex quasi-Banach setting. In the critical
case, the Hansson and Trudinger targets are obtained by applying the
Banach results to \((\widetilde X,\psi^r)\), with oscillation
exponent \(1\), and then deconvexifying the resulting target.

\section{Auxiliary proofs}

\subsection{Proof of Lemma~\ref{Mpus}}

\begin{proof}
Since $P_{r}$ depends only on $|f|$, we may assume $f\geq0$. For
$t>0$,
\[
\left(  \frac{P_{r}f(t)}{\psi(t)}\right)  ^{r}=\frac{1}{t\,\psi(t)^{r}}%
\int_{0}^{t}f(s)^{r}\,ds=\int_{0}^{1}\left(
\frac{f(st)}{\psi(t)}\right) ^{r}\,ds.
\]
Since $0<r\leq1$, Jensen's inequality for the concave function
$x\mapsto x^{r}$ yields
\[
\left(  \frac{P_{r}f(t)}{\psi(t)}\right)  ^{r}\leq\left(  \int_{0}^{1}%
\frac{f(st)}{\psi(t)}\,ds\right)  ^{r}.
\]
Hence
\[
\frac{P_{r}f(t)}{\psi(t)}\leq\int_{0}^{1}\frac{f(st)}{\psi(t)}\,ds
\leq\int_{0}^{1}\frac{\psi(st)}{\psi(t)}\,\frac{f(st)}{\psi(st)}\,ds
\leq\int_{0}^{1}m_{\psi}(s)\,\frac{f(st)}{\psi(st)}\,ds.
\]

Taking the $Y$-norm and using Minkowski's integral inequality,
\[
\left\Vert \frac{P_{r}f}{\psi}\right\Vert
_{Y}\leq\int_{0}^{1}m_{\psi }(s)\left\Vert
\frac{f(s\cdot)}{\psi(s\cdot)}\right\Vert _{Y}\,ds,
\]
which implies
\[
\left\Vert \frac{f(s\cdot)}{\psi(s\cdot)}\right\Vert _{Y}\leq h_{Y}%
(1/s)\left\Vert \frac{f}{\psi}\right\Vert _{Y},
\]
where $h_{Y}$ denotes the dilation function of $Y$. Since
$h_{Y}(1/s)\leq \max\{1,1/s\}$ (see \cite{BS,KPS}), we obtain
\[
\left\Vert \frac{P_{r}f}{\psi}\right\Vert _{Y}\leq\left(
\int_{0}^{1}m_{\psi
}(s)\,h_{Y}(1/s)\,ds\right)  \left\Vert \frac{f}{\psi}\right\Vert _{Y}%
\leq\left(  \int_{0}^{1}\frac{m_{\psi}(s)}{s}\,ds\right)  \left\Vert
\frac {f}{\psi}\right\Vert _{Y}.
\]

Finally, since $\psi\in\mathcal{A}_{0}$ we have
$\underline{\beta}_{\psi}>0$, hence $m_{\psi}(s)\leq
C_{\varepsilon}s^{\underline{\beta}_{\psi}-\varepsilon }$ for any
$\underline{\beta}_{\psi}>\varepsilon>0$. Therefore
\[
\int_{0}^{1}\frac{m_{\psi}(s)}{s}\,ds<\infty,
\]
and the proof is complete.
\end{proof}

\subsection{Berezhnoi's criterion for localized Hardy operators}
\label{app:berezhnoi_criterion}

\begin{definition}
Let $1\le\alpha<\infty$. Let $X$ and $Y$ be r.i.\ spaces. The couple
$(X,Y)$ is called an \emph{$\alpha$-Berezhnoi pair} if $X$ satisfies
an $\alpha$-lower estimate and $Y$ satisfies an $\alpha$-upper
estimate.
\end{definition}

The following result is a reformulation of Berezhnoi's
characterization of the boundedness of Hardy-type operators between
rearrangement-invariant spaces; see
\cite{Berezhnoi1,Berezhnoi2,Berezhnoi3}.

\begin{theorem}
Let $X$ and $Y$ be r.i.\ spaces, and let
\[
Tf(t)=\int_{t}^{1} K(t,s)\,f(s)\,\frac{ds}{s},\qquad0<t<1,
\]
where $K(t,s)\ge0$ is measurable for $0<t<s<1$. Assume that $(X,Y)$
is an $\alpha$-Berezhnoi pair for some $1\le\alpha<\infty$. Then the
following statements are equivalent:

\begin{enumerate}
\item $T:X\to Y$ is bounded;

\item
\[
\sup_{0<x<1}\varphi_{Y}(x)\, \left\Vert
K(x,s)\chi_{(x,1]}(s)\right\Vert _{X^{\prime}}<\infty.
\]

\end{enumerate}
\end{theorem}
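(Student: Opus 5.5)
Throughout I read the norm in (2) with the measure $ds/s$ absorbed into the kernel, that is, as $\|K(x,s)s^{-1}\chi_{(x,1]}(s)\|_{X'}$. This is how the criterion is used in Theorem~\ref{thm:Qpsi_rconc} and Theorem~\ref{thm:critical_general}, where the kernel appears as $\psi(s)^r/s$. Write $N(x):=\|K(x,s)s^{-1}\chi_{(x,1]}(s)\|_{X'}$. I will also use monotonicity of $K$ in its first variable, namely $K(t,s)\ge K(x,s)$ and $K(t,s)\le K(x,s)$ for $t<x<s$, where indicated. I flag this as an extra assumption beyond the statement as worded. It holds trivially for the $t$-independent kernels $\psi(s)^r$ to which the paper applies the theorem.

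\emph{Necessity} (1)$\Rightarrow$(2). Fix $0<x<1$ and take $f\ge0$ supported in $(x,1]$ with $\|f\|_X\le1$. For $t<x$, monotonicity gives
\[
Tf(t)=\int_x^1K(t,s)f(s)\frac{ds}{s}\ge\int_x^1K(x,s)f(s)\frac{ds}{s}=:c_f .
\]
Hence $Tf\ge c_f\chi_{(0,x)}$. The lattice property of $Y$ then yields $\varphi_Y(x)\,c_f\le\|T\|$. Taking the supremum over such $f$ and using the definition of the associate norm gives $\varphi_Y(x)N(x)\le\|T\|$. This uses neither the lower nor the upper estimate.

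\emph{Sufficiency} (2)$\Rightarrow$(1). Let $B:=\sup_x\varphi_Y(x)N(x)$ and take $f\ge0$ with $\|f\|_X\le1$. Since $N$ is decreasing, choose a decreasing sequence $1=x_0>x_1>\dots\to0$ adapted to $N$, with $N(x_i)\simeq2^i$ (passing to the case $N$ bounded separately, where one block suffices). Set $J_i=(x_{i+1},x_i]$ and $f_i=f\chi_{J_i}$.

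For $t\in J_i$, split the integral over $(t,1]$ into $\bigcup_{m<i}J_m$ and $(t,x_i]$. H\"older's inequality together with $K(t,s)\le K(x_{i+1},s)$ gives
\[
Tf(t)\le\sum_{m\le i}\|f_m\|_X\,N(x_{m+1})\preceq\sum_{m\le i}2^{m}\|f_m\|_X .
\]
Therefore
\[
\|Tf\,\chi_{J_i}\|_Y\le\varphi_Y(x_i)\sum_{m\le i}2^m\|f_m\|_X\preceq B\sum_{m\le i}2^{m-i}\|f_m\|_X ,
\]
using $\varphi_Y(x_i)\le B/N(x_i)\simeq B2^{-i}$.

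Now I combine the blocks. The $\alpha$-upper estimate of $Y$, applied to the disjoint pieces $Tf\chi_{J_i}$, gives
\[
\|Tf\|_Y\preceq\Bigl(\sum_i\|Tf\chi_{J_i}\|_Y^\alpha\Bigr)^{1/\alpha}.
\]
The discrete Young inequality for the convolution with the geometric kernel $2^{-(i-m)}\chi_{m\le i}$ on $\ell^\alpha$ then bounds this by $B(\sum_m\|f_m\|_X^\alpha)^{1/\alpha}$. Finally, the $\alpha$-lower estimate of $X$ bounds $(\sum_m\|f_m\|_X^\alpha)^{1/\alpha}$ by $C\|f\|_X$.

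The main obstacle is the choice of the partition $\{x_i\}$. If $N$ jumps, exact doubling is impossible, so I plan to define $x_i:=\sup\{x:N(x)\ge2^i\}$. Repeated indices are merged, and a jump only helps, since it makes the geometric decay in $i-m$ stronger. Right-continuity issues are handled by replacing $N(x_{m+1})$ with $N(x_{m+1}^+)\le2^{m+1}$ on the open block. A secondary point is that without monotonicity of $K$ in $t$ the pointwise bound on each block fails. This is why the proof goes through for the paper's kernels $\psi(s)^r$ but needs that hypothesis in general.
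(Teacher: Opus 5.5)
Your argument is correct for the kernels you actually treat, and it takes a different route from the paper. The paper gives no proof of this statement; it quotes it as a reformulation of Berezhnoi's theorem.

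You prove it directly. Necessity comes from testing $T$ on functions supported in $(x,1]$, which uses neither estimate. Sufficiency comes from cutting $(0,1)$ at the dyadic level sets of $N(x)=\|K(x,s)s^{-1}\chi_{(x,1]}(s)\|_{X'}$ and bounding $Tf$ on each block by a multiple of $\sum_{m\le i}2^{m}\|f\chi_{J_m}\|_X$. You then combine the $\alpha$-upper estimate of $Y$, Young's inequality in $\ell^\alpha$, and the $\alpha$-lower estimate of $X$. This is essentially the discretization behind Berezhnoi's result, and it shows exactly where each half of the Berezhnoi-pair hypothesis enters.

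The citation buys apparent generality in $K$. But the statement as written (arbitrary measurable $K$, no factor $1/s$ in (2)) cannot be literally true. Redefining $K(x_0,\cdot)$ for a single $x_0$ leaves $T$ unchanged a.e. but can make (2) fail. So your reading with $1/s$ absorbed is the right one, and so is your restriction on $K$. Together they cover exactly $\overline T_{\psi,r}$ as used in Theorem~\ref{thm:Qpsi_rconc}.

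A few details should be tightened.
\begin{itemize}
\item Your two inequalities together just say $K(t,s)=k(s)$ for $t<s$, so assume that outright rather than calling it monotonicity.
\item For the blocks $m<i$ you are using $K(t,s)\le K(x_{m+1},s)$ with $t\le x_{m+1}$. This is not the inequality $K(t,s)\le K(x_{i+1},s)$ that you cite, which only handles the piece $(t,x_i]$.
\item With $K=k(s)$, $N$ is decreasing and, by the Fatou property of $X'$, right-continuous. Hence $N(x_{m+1})\le 2^{m+1}$ holds directly for $x_{m+1}=\sup\{x:N(x)\ge 2^{m+1}\}$.
\item Conversely, $N(x_i)$ may lie far below $2^i$ if $N$ jumps at $x_i$. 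So $\varphi_Y(x_i)\le B2^{-i}$ must be deduced from $N\ge 2^i$ on $(0,x_i)$ together with the continuity of the quasi-concave function $\varphi_Y$. This is what makes ``a jump only helps'' true.
\item The top block where $N<1$ (note $N(1)=0$, so $x_0=1$ cannot satisfy $N(x_0)\simeq 1$) and the case $N(0^+)<\infty$ follow from the trivial bound $Tf(t)\le N(t)\|f\|_X$.
\item The passage from finite to infinite families in the upper and lower estimates is by the Fatou property.
\end{itemize}
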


We now specialize this criterion to the operator
$\overline{Q}_{\psi,r}$.

\begin{theorem}
Let $1\leq\alpha<\infty$, and let $(X,Y)$ be an $\alpha$-Berezhnoi
pair of r.i.\ spaces. Then $\overline{Q}_{\psi,r}:X\rightarrow Y$ is
bounded if and only if
\[
\sup_{0<x<1}\varphi_{Y^{(1/r)}}(x)\,\left\Vert \frac{\psi(s)^{r}}{s}%
\chi_{(x,1]}(s)\right\Vert _{(X^{(1/r)})^{\prime}}<\infty.
\]

\end{theorem}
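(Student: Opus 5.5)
The plan is to reduce this to the general Berezhnoi criterion stated just above. Throughout, the kernel is $K(t,s)=\psi(s)^r$, independent of $t$, and the couple is replaced by its $1/r$-convexification.

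\emph{Step 1 (reduction to $\overline T_{\psi,r}$).} By the identity $\overline{Q}_{\psi,r}f=(\overline{T}_{\psi,r}|f|^r)^{1/r}$ used in the proof of Theorem~\ref{inclu} (the equivalence (ii)$\Leftrightarrow$(iii)), $\overline Q_{\psi,r}:X\to Y$ is bounded if and only if $\overline T_{\psi,r}:X^{(1/r)}\to Y^{(1/r)}$ is bounded. This step is a purely formal consequence of $\|g\|_{Y^{(1/r)}}=\||g|^{1/r}\|_Y^{r}$.

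\emph{Step 2 (the convexified couple is again a Berezhnoi pair).} By Remark~\ref{rem:convexification_estimates} with $p=1/r\ge1$, $X^{(1/r)}$ satisfies an $(\alpha/r)$-lower estimate and $Y^{(1/r)}$ an $(\alpha/r)$-upper estimate. Both spaces are Banach r.i.\ spaces because $1/r\ge1$, and $\alpha/r\ge1$. Hence $(X^{(1/r)},Y^{(1/r)})$ is an $(\alpha/r)$-Berezhnoi pair.

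\emph{Step 3 (apply the general criterion).} Write
\[
\overline T_{\psi,r}h(t)=\int_t^1 K(t,s)h(s)\,\frac{ds}{s},\qquad K(t,s)=\psi(s)^r\ge 0 .
\]
The preceding theorem, applied to the pair $(X^{(1/r)},Y^{(1/r)})$, gives that $\overline T_{\psi,r}$ is bounded if and only if
\[
\sup_{0<x<1}\varphi_{Y^{(1/r)}}(x)\,\Vert \psi(s)^r\chi_{(x,1]}(s)\Vert_{(X^{(1/r)})'}<\infty .
\]

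\emph{Main obstacle.} The only subtle point is the normalization of the kernel, namely the extra factor $1/s$ that appears in the displayed condition of the statement. I would first check which convention for the measure is used in the cited form of Berezhnoi's theorem. If the kernel there is meant to be tested with respect to $ds$ rather than $ds/s$, then writing $\overline T_{\psi,r}h(t)=\int_t^1 \frac{\psi(s)^r}{s}h(s)\,ds$ with kernel $\psi(s)^r/s$ produces exactly $\Vert \psi(s)^r s^{-1}\chi_{(x,1]}\Vert_{(X^{(1/r)})'}$. That is the expression needed: it is the norm that arises from testing $\overline T_{\psi,r}$ by duality against $\chi_{(0,x)}$, since by Fubini
\[
\int_0^x \overline T_{\psi,r}h=\int_0^1 h(s)\psi(s)^r\min(x,s)\,\frac{ds}{s}\ge x\int_x^1 h(s)\frac{\psi(s)^r}{s}\,ds .
\]
So I would match the kernel as $\psi(s)^r/s$ against $ds$, which is the form consistent with the necessity computation just displayed. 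Combining Steps 1–3 then yields the claimed equivalence.
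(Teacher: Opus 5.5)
Your proposal is correct and follows the paper's proof exactly. Both pass from $\overline{Q}_{\psi,r}$ to $\overline{T}_{\psi,r}$ on the $1/r$-convexified couple, observe that $(X^{(1/r)},Y^{(1/r)})$ is an $(\alpha/r)$-Berezhnoi pair, and invoke Berezhnoi's criterion. The normalization issue you flag is genuine: the general criterion as displayed in the paper (kernel $K$ against $ds/s$) would literally produce $\Vert\psi(s)^{r}\chi_{(x,1]}(s)\Vert_{(X^{(1/r)})^{\prime}}$, while the paper silently uses the $ds$-form with kernel $\psi(s)^{r}/s$. Your reading, backed by the testing computation against $\chi_{(0,x)}$, is the one that yields the stated (and correct) condition.
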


\begin{proof}
Since $\overline{Q}_{\psi,r}$ is not linear, we pass to the
associated Hardy-type operator
\[
\overline{T}_{\psi,r}g(t)=\int_{t}^{1}\psi(s)^{r}
g(s)\,\frac{ds}{s}, \qquad0<t<1.
\]
As in Theorem~\ref{inclu},
\begin{equation}
\overline{Q}_{\psi,r}:X\to Y \text{ is bounded}
\quad\Longleftrightarrow \quad\overline{T}_{\psi,r}:X^{(1/r)}\to
Y^{(1/r)} \text{ is bounded}.
\label{Bere1}%
\end{equation}

Since $(X,Y)$ is an $\alpha$-Berezhnoi pair, the convexified couple
$(X^{(1/r)},Y^{(1/r)})$ is an $(\alpha/r)$-Berezhnoi pair. Hence
Berezhnoi's criterion applies to $\overline{T}_{\psi,r}$ and yields
that
\[
\overline{T}_{\psi,r}:X^{(1/r)}\to Y^{(1/r)}
\]
is bounded if and only if
\[
\sup_{0<x<1}\varphi_{Y^{(1/r)}}(x)\, \left\Vert \frac{\psi(s)^{r}}{s}%
\chi_{(x,1]}(s)\right\Vert _{(X^{(1/r)})^{\prime}} <\infty.
\]
Combining this with (\ref{Bere1}) gives (\ref{BerezhC}).
\end{proof}

\subsection{Proof of Lemma~\ref{Sharpley}}
\label{app:sharpley}

\begin{proof}
Since \(X\) is rearrangement-invariant, we may assume that
\(f=f^*\). Let
\[
I_k=(2^{-(k+1)},2^{-k}],\qquad k=0,1,2,\ldots .
\]
Since \(f^*\) is decreasing, for \(t\in I_k\) we have
\[
f^*(t)\le f^*(2^{-(k+1)}).
\]
Hence
\[
f^*(t) \le \sum_{k=0}^{\infty} f^*(2^{-(k+1)})\chi_{I_k}(t).
\]
Using the lattice property and applying the \(p\)-upper estimate to
finite partial sums, and then passing to the limit by the Fatou
property of the function norm, we get
\[
\|f\|_X = \|f^*\|_X \preceq \left( \sum_{k=0}^{\infty} \left[
f^*(2^{-(k+1)})\|\chi_{I_k}\|_X \right]^p \right)^{1/p}.
\]
Since
\[
\|\chi_{I_k}\|_X \le \|\chi_{(0,2^{-k})}\|_X = \varphi_X(2^{-k}),
\]
and since \(\varphi_X\) is quasi-concave,
\[
\varphi_X(2^{-k}) \preceq \varphi_X(2^{-(k+1)}).
\]
Therefore
\[
\|f\|_X \preceq \left( \sum_{k=0}^{\infty} \left[
f^*(2^{-(k+1)})\varphi_X(2^{-(k+1)}) \right]^p \right)^{1/p}.
\]

We now compare this sum with the integral defining \(\Lambda^p(X)\).
If \(t\in I_{k+1}\), then \(t\le 2^{-(k+1)}\), and hence
\[
f^*(t)\ge f^*(2^{-(k+1)}).
\]
Moreover, by quasi-concavity of \(\varphi_X\),
\[
\varphi_X(t) \simeq \varphi_X(2^{-(k+1)}), \qquad t\in I_{k+1}.
\]
Consequently,
\[
\left[ f^*(2^{-(k+1)})\varphi_X(2^{-(k+1)}) \right]^p \preceq
\int_{I_{k+1}} \left(f^*(t)\varphi_X(t)\right)^p\,\frac{dt}{t}.
\]
Summing over \(k\ge0\), we obtain
\[
\sum_{k=0}^{\infty} \left[ f^*(2^{-(k+1)})\varphi_X(2^{-(k+1)})
\right]^p \preceq \int_0^1
\left(f^*(t)\varphi_X(t)\right)^p\,\frac{dt}{t}.
\]
Combining the previous estimates gives
\[
\|f\|_X \preceq \left( \int_0^1
\left(f^*(t)\varphi_X(t)\right)^p\,\frac{dt}{t} \right)^{1/p} =
\|f\|_{\Lambda^p(X)}.
\]
The proof is complete.
\end{proof}

\end{document}